\newtheorem{theorem}{Theorem}[section]
\newtheorem{definition}[theorem]{Definition}
\newtheorem{proposition}[theorem]{Proposition}
\newtheorem{problem}[theorem]{Problem}
\newtheorem{lemma}[theorem]{Lemma}
\begin{document}

\title{On orthogonal matrices maximizing the 1-norm}

\author{Teodor Banica}
\address{T.B.: Institut of Mathematics at Toulouse, UMR CNRS 5219, Paul Sabatier University, 31062 Toulouse cedex 9, France. {\tt banica@math.univ-toulouse.fr}}

\author{Beno\^\i{}t Collins}
\address{B.C.: Department of Mathematics, Lyon 1 University, and University of Ottawa, 585 King Edward, Ottawa, ON K1N 6N5, Canada. {\tt bcollins@uottawa.ca}}

\author{Jean-Marc Schlenker}
\address{J.-M.S.: Institut of Mathematics at Toulouse, UMR CNRS 5219, Paul Sabatier University, 31062 Toulouse cedex 9, France. {\tt schlenker@math.univ-toulouse.fr}}

\begin{abstract}
For $U\in O(N)$ we have $||U||_1\leq N\sqrt{N}$, with equality if and only if $U=H/\sqrt{N}$, with $H$ Hadamard matrix. Motivated by this remark, we discuss in this paper the algebraic and analytic aspects of the computation of the maximum of the 1-norm on $O(N)$. The main problem is to compute the $k$-th moment of the 1-norm, with $k\to\infty$, and we present a number of general comments in this direction.
\end{abstract}

\subjclass[2000]{47A30 (05B20)}
\keywords{Orthogonal group, Hadamard matrix}

\maketitle

\section*{Introduction}

The Hadamard conjecture states that for any $N\in 4\mathbb N$, there is an Hadamard matrix of order $N$. This conjecture, known for a long time, and reputed to be of remarkable difficulty, has been verified so far up to $N=664$. See \cite{kta}, \cite{lgo}.

The asymptotic Hadamard conjecture states that for any $N\in 4\mathbb N$ big enough, there is an Hadamard matrix of order $N$. There are of course several problems here, depending on the exact value or theoretical nature of the lower bound. 

The interest in this latter problem comes from the fact that it has several analytic formulations, in terms of random matrices or group integrals. First, since the Hadamard matrices are the $\pm 1$ matrices which maximize the determinant, a natural approach would be via the statistical properties of the determinant of random $\pm 1$ matrices. There are several tools available here, see \cite{nra}, \cite{tvu}.

A second approach comes from an opposite point of view: start with the orthogonal  group $O(N)$, and try to find inside a matrix having $\pm 1/\sqrt{N}$ entries. 

In order to locate these latter matrices, a natural idea would be to use the $p$-norm, $p\neq 2$. Indeed, it follows from the H\"older inequality that the $\pm 1/\sqrt{N}$ matrices maximize the $p$-norm on $O(N)$ at $p<2$, and minimize it at $p>2$.

In this paper we present some results at $p=1$. We work out a number of remarkable algebraic properties of the maximizers of the 1-norm, and we perform as well some analytic computations. Most of our results concern the following key quantity:
$$K_N=\sup_{U\in O(N)}||U||_1$$

With this notation, the various estimates that we obtain, and their relation to the Hadamard conjecture (HC), are as follows:
\begin{center}
\begin{tabular}[t]{|l|l|l|l|l|}
\hline Type&Estimate&Comment\\
\hline
\hline Cauchy-Schwarz&$K_N=N\sqrt{N}$&Equivalent to HC at $N$\\
\hline Cauchy-Schwarz, improved&$K_N\geq N\sqrt{N}-1/(N\sqrt{N})$&Equivalent to HC at $N$\\
\hline Elementary&$K_N\geq (N-4.5)\sqrt{N}$&Implied by HC\\
\hline Spherical integral&$K_N\geq 0.797N\sqrt{N}$&True\\
\hline
\end{tabular}
\end{center}
\medskip

In addition, we prove that we have $K_3=5$: an elementary  result about the usual $3\times 3$ matrices, whose proof is non-trivial, and which seems to be unknown.

The estimate in the last row comes by computing the average over $O(N)$ of the 1-norm. In general, the problem is to compute the $k$-th moment of the 1-norm:
$$I_k=\int_{O(N)}||U||_1^k\,dU$$

Indeed, since $K_N=\lim_{k\to\infty}I_k^{1/k}$, a good knowledge of $I_k$ would allow in principle to derive fine estimates on $K_N$, in the spirit of those in the first rows of the table. The point, however, is that the computation of $I_k$ is a highly non-trivial task.

Summarizing, we obtain in this paper some evidence for the existence of an ``analytic approach'' to the Hadamard conjecture. We do not know if this approach can lead to concrete results, but we intend to further pursue it in a forthcoming paper \cite{bcs}, where we will investigate the various terms appearing in the expansion of $I_k$:
$$I_k=\sum_{i_1\ldots i_k}\sum_{j_1\ldots j_k}\int_{O(N)}|U_{i_1j_1}\ldots U_{i_kj_k}|\,dU$$

Finally, let us mention that some more piece of evidence for an analytic approach to the ``Hadamard matrix count'' comes from the ``magic square count'' performed by Diaconis and Gamburd in \cite{dga}. In fact, the ultimate dream would be that an analytic formula not only gives the existence of Hadamard matrices, but also counts them.

The paper is organized as follows: in 1-2 we discuss the basic problematics, in 3-4 we use various methods from the calculus of variations, and in 5-6 we use a number of integration techniques. The final sections, 7-8, contain a few concluding remarks.

\subsection*{Acknowledgements}

The work of T.B. and B.C. was supported by the ANR grants ``Galoisint'' and ``Granma''. T.B. and J.-M.S. would like to thank the University of Ottawa, where part of this work was done, for its warm hospitality and support.

\section{General considerations}

An Hadamard matrix is a matrix $H\in M_N(\pm 1)$, whose rows are pairwise orthogonal. It is known that if an Hadamard matrix exists, then $N=2$ or $4|N$.

It is conjectured that the Hadamard matrices of order $N$ exist, for any $4|N$. In what follows we will take this conjecture for granted, in the sense that we will avoid the problems solved by it, and we will basically restrict attention to the case $4\!\!\not|N$. 

Needless to say, our interest in the problems investigated below comes precisely from the Hadamard conjecture: that we don't attempt, however, to solve here.

If $H$ is Hadamard, it follows from definitions that $U=H/\sqrt{N}$ is orthogonal. Observe that by changing a row of $H$, we may assume $U\in SO(N)$.

The $1$-norm of a square matrix $U\in M_N(\mathbb R)$ is given by:
$$||U||_1=\sum_{ij=1}^N|U_{ij}|$$

Our starting point is the following observation.

\begin{proposition}
For $U\in O(N)$ we have $||U||_1\leq N\sqrt{N}$, with equality if and only if $U=H/\sqrt{N}$, for a certain Hadamard matrix $H$.
\end{proposition}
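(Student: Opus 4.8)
The plan is to apply the Cauchy--Schwarz inequality row by row, and then track the equality case carefully. Fix $U \in O(N)$ and, for each row index $i$, apply Cauchy--Schwarz to the vector $(|U_{i1}|, \dots, |U_{iN}|)$ against the all-ones vector: this gives $\sum_{j=1}^N |U_{ij}| \leq \sqrt{N}\,\bigl(\sum_{j=1}^N U_{ij}^2\bigr)^{1/2} = \sqrt{N}$, since the $i$-th row of an orthogonal matrix is a unit vector. Summing over the $N$ rows yields $||U||_1 \leq N\sqrt{N}$, which is the desired bound.

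Next I would analyze the equality case. Equality in $||U||_1 \le N\sqrt N$ forces equality in each of the $N$ applications of Cauchy--Schwarz. The equality condition for Cauchy--Schwarz against the all-ones vector says that $(|U_{i1}|, \dots, |U_{iN}|)$ is a constant vector, i.e.\ $|U_{ij}|$ does not depend on $j$. Combined with $\sum_j U_{ij}^2 = 1$, this gives $|U_{ij}| = 1/\sqrt{N}$ for all $i,j$. Hence $H := \sqrt{N}\,U$ is a matrix with all entries in $\{\pm 1\}$. Since $U \in O(N)$, its rows are pairwise orthogonal, so the rows of $H$ are pairwise orthogonal as well; thus $H$ is by definition an Hadamard matrix, and $U = H/\sqrt N$.

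Conversely, if $U = H/\sqrt N$ for an Hadamard matrix $H$, then every entry of $U$ has absolute value $1/\sqrt N$, so $||U||_1 = N^2 \cdot \frac{1}{\sqrt N} = N\sqrt N$, and we already know (as recorded in the excerpt) that such $U$ lies in $O(N)$. This establishes the ``if'' direction and completes the characterization.

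The argument is essentially routine; the only point requiring a little care is making sure that the equality discussion is airtight --- in particular, that the equality case of Cauchy--Schwarz is being invoked in the correct direction (constancy of the absolute values of the entries in each row), and that orthogonality of the rows of $U$ transfers to $H$ after rescaling. There is no serious obstacle here; the proposition is meant as a warm-up observation, and the substance of the paper lies in the harder estimates for $K_N$ and the moments $I_k$ that follow.
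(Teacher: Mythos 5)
Your proof is correct and takes essentially the same approach as the paper: Cauchy--Schwarz against the all-ones vector, with equality forcing $|U_{ij}|=1/\sqrt N$ and hence $H=\sqrt N\,U$ Hadamard. The only cosmetic difference is that you apply the inequality row by row and sum, while the paper applies it once to all $N^2$ entries using $\sum_{ij}U_{ij}^2=N$; the equality analysis is the same in substance, and your explicit check of the converse direction is a harmless addition.
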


\begin{proof}
We use the Cauchy-Schwarz inequality:
$$\sum_{ij=1}^N1\cdot |U_{ij}|\leq\left(\sum_{ij=1}^N1^2\right)^{1/2}\left(\sum_{ij=1}^NU_{ij}^2\right)^{1/2}$$

The left term term being $||U||_1$ and the right term being $N\sqrt{N}$, this gives the estimate. Moreover, the equality holds when the numbers $|U_{ij}|$ are proportional, and since the sum of squares of these numbers is $N^2$, we conclude that we have equality if and only if $|U_{ij}|=1/\sqrt{N}$, i.e. if and only if $H=\sqrt{N}U$ is Hadamard. 
\end{proof}

This suggests the following problem.

\begin{problem}
For $N>2$ not multiple of $4$, what are the matrices $U\in O(N)$ which maximize the $1$-norm on $O(N)$? 
\end{problem}

It is not clear for instance what happens with these matrices when $N$ varies. Another question is whether the extremum value is computable or not. And so on.

The above problem is invariant under the following equivalence relation.

\begin{definition}
Two matrices $U,V\in O(N)$ are called equivalent if one can pass from one to the other by permuting the rows and columns, or by multiplying them by $-1$.
\end{definition}

Observe in particular that by multiplying a single row with a $-1$, we can assume that our matrix maximizing the 1-norm is in $SO(N)$.

\section{A sharp estimate}

At $N=3$ we have the following remarkable result.

\begin{theorem}
For $U\in O(3)$ we have $||U||_1\leq 5$, and this estimate is sharp.
\end{theorem}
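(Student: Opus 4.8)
My plan is to prove the bound $\|U\|_1\leq 5$ and the sharpness by separate arguments, the sharpness being immediate and the bound reducing to a finite combinatorial statement about $\pm1$ matrices. For sharpness I would simply exhibit the matrix
$$U_0=\frac13\begin{pmatrix}1&2&2\\2&1&-2\\2&-2&1\end{pmatrix},$$
check that its rows are orthonormal (each has squared norm $\tfrac19(1+4+4)=1$, and the three pairwise inner products $\tfrac19(2+2-4)$, $\tfrac19(2-4+2)$, $\tfrac19(4-2-2)$ all vanish), so $U_0\in O(3)$, and observe that $\|U_0\|_1=\tfrac13\cdot 15=5$.

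For the upper bound, the first step is to replace $U$ by its sign pattern. Given $U\in O(3)$, let $S\in M_3(\{\pm1\})$ record the signs of the entries (with $S_{ij}=1$ when $U_{ij}=0$), so that $\|U\|_1=\sum_{ij}U_{ij}S_{ij}=\langle U,S\rangle$. Taking a singular value decomposition $S=PDQ^t$ with $P,Q\in O(3)$ and $D=\mathrm{diag}(\sigma_1,\sigma_2,\sigma_3)$, cyclicity of the trace gives $\langle U,S\rangle=\mathrm{Tr}(Q^tU^tPD)=\sum_k(Q^tU^tP)_{kk}\sigma_k\leq\sigma_1+\sigma_2+\sigma_3$, since the orthogonal matrix $Q^tU^tP$ has all entries of modulus at most $1$. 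Hence $\|U\|_1$ is bounded by the nuclear norm $\|S\|_*$ of $S$, and the theorem reduces to the purely combinatorial claim that $\|S\|_*\leq 5$ for every $3\times3$ matrix $S$ with $\pm1$ entries.

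To prove this claim I would argue by the rank of $S$. If $\mathrm{rank}(S)\leq 2$, then $S$ has at most two nonzero singular values, so $\|S\|_*\leq\sqrt2\,\bigl(\sum_{ij}S_{ij}^2\bigr)^{1/2}=3\sqrt2<5$, and there is nothing more to do. If $\mathrm{rank}(S)=3$, write $r_1,r_2,r_3\in\{\pm1\}^3$ for the rows and put $B=SS^t-3I$; then $B$ is symmetric, traceless, with $B_{ij}=r_i\cdot r_j$ for $i\neq j$. Each $r_i\cdot r_j$ is an odd integer in $[-3,3]$, and it is not $\pm3$ because no two rows are proportional, so $B_{ij}\in\{-1,1\}$. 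The key point is a parity observation: every coordinate of $r_1+r_2+r_3$ is odd, hence $\|r_1+r_2+r_3\|^2\equiv 3\pmod 8$; comparing with $\|r_1+r_2+r_3\|^2=9+2(B_{12}+B_{13}+B_{23})$ forces $B_{12}+B_{13}+B_{23}\in\{-3,1\}$, and in both cases $B_{12}B_{13}B_{23}=-1$. A short direct computation then gives $B^2=2I-B$, so the eigenvalues of $B$ lie in $\{1,-2\}$; being three in number with sum $0$, they must be $1,1,-2$. Therefore $SS^t=B+3I$ has eigenvalues $4,4,1$, the singular values of $S$ are $2,2,1$, and $\|S\|_*=2+2+1=5$, which finishes the proof.

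I expect the full-rank case to be the only genuine difficulty. The danger is that $SS^t=3I+B$ with $B$ symmetric, traceless, and $\pm1$ off the diagonal, yet with $B$ having eigenvalues $-1,-1,2$ rather than $1,1,-2$: that would give $SS^t$ the eigenvalues $2,2,5$ and $\|S\|_*=2\sqrt2+\sqrt5>5$, destroying the bound. Ruling this out amounts precisely to showing that such a $B$ cannot arise from an actual $\pm1$ matrix, and this is exactly what the congruence $\|r_1+r_2+r_3\|^2\equiv 3\pmod 8$ delivers — so that congruence is the heart of the argument. One could instead try to locate the maximizers of $\|\cdot\|_1$ on $O(3)$ by the Lagrange-multiplier methods of Sections 3--4, but that route still leads to estimating $\|S\|_*$ over $\pm1$ matrices, so the lemma above seems to be the essential content in any case.
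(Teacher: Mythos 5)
Your proof is correct, and it takes a genuinely different route from the paper. The paper restricts to $SO(3)$, parametrizes by the Euler--Rodrigues formula over unit quaternions $(x,y,z,t)\in S^3$, and reduces the bound to a homogeneous inequality in four real variables which it settles by symmetrization and case analysis; this has the side benefit of locating the maximizer explicitly. You instead use the duality $||U||_1=\langle U,S\rangle\leq\sigma_1+\sigma_2+\sigma_3$ (von Neumann/SVD, with $S$ the sign matrix), reducing everything to the combinatorial claim that every $3\times 3$ sign matrix has nuclear norm at most $5$, which you prove via the Gram matrix $SS^t=3I+B$ and the mod $8$ congruence $||r_1+r_2+r_3||^2\equiv 3$; that congruence correctly kills the dangerous spectrum $(-1,-1,2)$ for $B$, which is indeed the only obstruction, and the rank $\leq 2$ case and the sharpness example check out. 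Your argument is arguably more conceptual: it shows in passing that $K_3=\max_{S\in M_3(\pm 1)}||S||_*$ (and the same duality gives $K_N\leq\max_S||S||_*$ for all $N$, with equality by exchanging the two suprema), and it meshes naturally with the sign-matrix formalism of Section 3, whereas the paper's quaternionic computation is elementary (no SVD needed) but specific to $N=3$ and requires a somewhat delicate case discussion; on the other hand the paper's route directly exhibits the extremal matrix, which your bound only confirms a posteriori through the sharpness example.
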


\begin{proof}
According to the general remarks in the end of the previous section, we can assume that we have $U\in SO(3)$. We use the Euler-Rodrigues formula:
$$U=\begin{pmatrix}
x^2+y^2-z^2-t^2&2(yz-xt)&2(xz+yt)\\
2(xt+yz)&x^2+z^2-y^2-t^2&2(zt-xy)\\
2(yt-xz)&2(xy+zt)&x^2+t^2-y^2-z^2
\end{pmatrix}$$

Here $(x,y,z,t)\in S^3$ come from the double cover $SU(2)\to SO(3)$. Now in order to obtain the estimate, we linearize. We must prove that for any $x,y,z,t\in\mathbb R$ we have:
\begin{eqnarray*}
&&|x^2+y^2-z^2-t^2|+|x^2+z^2-y^2-t^2|+|x^2+t^2-y^2-z^2|\\
&&+2\left(|yz-xt|+|xz+yt|+|xt+yz|+|zt-xy|+|yt-xz|+|xy+zt|\right)\\
&&\leq 5(x^2+y^2+z^2+t^2)
\end{eqnarray*}

The problem being symmetric in $x,y,z,t$, and invariant under sign changes, we may assume that we have $x\geq y\geq z\geq t\geq 0$. Now if we look at the 9 absolute values in the above formula, in 7 of them the sign is known, and in the remaining 2 ones the sign is undetermined. More precisely, the inequality to be proved is:
\begin{eqnarray*}
&&(x^2+y^2-z^2-t^2)+(x^2+z^2-y^2-t^2)+|x^2+t^2-y^2-z^2|\\
&&+2\left(|yz-xt|+(xz+yt)+(xt+yz)+(xy-zt)+(xz-yt)+(xy+zt)\right)\\
&&\leq 5(x^2+y^2+z^2+t^2)
\end{eqnarray*}

After simplification and rearrangement of the terms, this inequality reads:
$$|x^2+t^2-y^2-z^2|+2|xt-yz|\leq 3x^2+5y^2+5z^2+7t^2-4xy-4xz-2xt-2yz$$

In principle we have now 4 cases to discuss, depending on the possible signs appearing at left. It is, however, easier to proceed simply by searching for the optimal case. 

First, by writing $y=\alpha+\varepsilon,z=\alpha-\varepsilon$ and by making $\varepsilon$ vary over the real line, we see that the optimal case is when $\varepsilon=0$, hence when $y=z$. 

The case $y=z=0$ or $y=z=\infty$ being clear (and not sharp) we can assume that we have $y=z=1$. Thus we must prove that for $x\geq 1\geq t\geq 0$ we have:
$$|x^2+t^2-2|+2|xt-1|\leq 3x^2+8+7t^2-8x-2xt$$

In the case $xt\geq 1$ we have $x^2+t^2\geq 2$, and the inequality becomes $2xt+4x\leq x^2+3t^2+6$, true. In the case $xt\leq 1,x^2+t^2\leq 2$ we get $x^2+1+2t^2\geq 2x$, true again. In the remaining case $xt\leq 1,x^2+t^2\geq 2$ we get $x^2+4+3t^2\geq 4x$, true again.

This finishes the proof of the estimate. Now regarding the maximum, according to the above discussion this is attained at $(xyzt)=(1110)$ or at $(xyzt)=(2110)$, plus permutations. The corresponding matrix is, modulo permutations:
$$A=\frac{1}{3}
\begin{pmatrix}
1&2&2\\
2&1&-2\\
-2&2&-1
\end{pmatrix}$$

For this matrix we have indeed $||A||_1=5$, and we are done.
\end{proof}

\section{Local maximizers}

In this section we find an abstract characterization of the orthogonal matrices locally maximizing the 1-norm. We begin with some lemmas.

\begin{lemma}
If $U\in O(N)$ locally maximizes the $1$-norm, then $U_{ij}\neq 0$ for any $i,j$.
\end{lemma}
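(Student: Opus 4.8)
The plan is to argue by contradiction: suppose $U\in O(N)$ locally maximizes the $1$-norm and yet some entry vanishes, say $U_{ij}=0$. The idea is to produce a small one-parameter perturbation of $U$ inside $O(N)$ along which the $1$-norm strictly increases, contradicting local maximality. The natural perturbations to use are rotations in a coordinate $2$-plane: fix two rows (or two columns) and rotate them by a small angle $\theta$, i.e. replace the pair of rows $R_a, R_b$ by $(\cos\theta) R_a + (\sin\theta) R_b$ and $-(\sin\theta) R_a + (\cos\theta) R_b$. This keeps the matrix orthogonal, and it only affects the two rows involved, so the change in $\|U\|_1$ is a sum over the $N$ columns of terms of the form $|c\,u + s\,v| + |{-s}\,u + c\,v| - |u| - |v|$, where $(u,v)$ ranges over the entries of $R_a,R_b$ in each column and $c=\cos\theta$, $s=\sin\theta$.

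The key step is to choose the rotation plane cleverly using the vanishing entry. Suppose $U_{ij}=0$. Pick another row index $i'\neq i$. Rotating rows $i$ and $i'$ by angle $\theta$ changes the $(i,j)$ entry from $0$ to $(\sin\theta)U_{i'j}$ and the $(i',j)$ entry from $U_{i'j}$ to $(\cos\theta)U_{i'j}$. If $U_{i'j}\neq 0$, the contribution of column $j$ to $\|U\|_1$ changes by $(|\sin\theta|+|\cos\theta|-1)|U_{i'j}| = (|\sin\theta|+|\cos\theta|-1)|U_{i'j}|$, which is $\Theta(|\theta|)|U_{i'j}|>0$ for small $\theta$ — a first-order gain. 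Meanwhile, every other column $k$ contributes a change of the form $|c\,U_{ik}+s\,U_{i'k}|+|{-s}\,U_{ik}+c\,U_{i'k}| - |U_{ik}| - |U_{i'k}|$, and away from the finitely many $\theta$ where some argument hits zero, this is a smooth function of $\theta$ whose total variation is $O(\theta^2)$ near $\theta=0$ — as long as none of the entries $U_{ik}, U_{i'k}$ vanish, so that the absolute values are differentiable at $\theta=0$ with derivatives that could a priori be $O(|\theta|)$... I should instead say: each such term is $O(\theta^2)$ plus possibly an $O(|\theta|)$ piece with a sign. So the honest statement is that, for a generic sign choice of $\theta$, the first-order terms from the ``safe'' columns can be made to help or at worst need to be dominated.

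To make this clean, I would pick the row $i'$ and, if necessary, further exploit the freedom in the rotation plane. The cleanest route: choose $i'$ such that $U_{i'j}\neq 0$ (possible since column $j$ is a unit vector, hence nonzero). The first-order change in $\|U\|_1$ as a function of the signed angle $\theta$ is a piecewise-linear function of $\theta$ near $0$ of the form $a|\theta| + b\theta + o(\theta)$ where the term from column $j$ contributes $a \ge |U_{i'j}| > 0$ to the coefficient of $|\theta|$ (since $|\sin\theta| = |\theta| + o(\theta)$ and $|\cos\theta| = 1 + o(\theta)$), and every other column contributes at most $|U_{ik}| + |U_{i'k}|$ in absolute value to the linear part — but actually the columns $k$ with $U_{ik}=0$ contribute $+|U_{i'k}|\,|\theta| + o(\theta)$ wait that has the wrong sign potential. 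The correct observation: a term $|{-s}\,u + c\,v|$ with $u = 0$ equals $|c\,v| = |v| + o(\theta)$, contributing nothing at first order, while $|c\,u+s\,v|$ with $u=0$ equals $|s\,v| = |\theta|\,|v| + o(\theta)$, contributing $+|v|$ to the $|\theta|$-coefficient — a gain. Columns with both $U_{ik},U_{i'k}\neq 0$ contribute a differentiable (at $\theta=0$) term, hence only $O(\theta)$ with a definite sign $b_k\theta$, and by replacing $\theta$ with $-\theta$ if needed we cannot kill the $a|\theta|$ term since $a>0$. Thus for $\theta$ of the appropriate sign (or either sign), $\|U\|_1$ strictly increases, contradicting local maximality. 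The main obstacle I anticipate is bookkeeping the non-differentiability: ensuring that the $O(|\theta|)$ gain from column $j$ genuinely dominates, which forces one to treat the columns where some entry is zero separately (they only help) from those where both entries are nonzero (which give a linear term of fixed sign, harmless once $a>0$). Writing this as a short contradiction argument with the explicit rotation in two rows should be routine once the sign accounting above is organized.
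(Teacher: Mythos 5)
Your argument is correct and is essentially the paper's own proof: both rotate the two rows containing the zero entry and the nonzero entry below it, extract a first-order gain of order $|\theta|$ from the columns having a zero in one of the two rows, and absorb the signed linear contribution of the remaining columns by choosing the sign of $\theta$ (which is exactly what the paper's case analysis with the row swap accomplishes). The only difference is bookkeeping -- the paper organizes the columns into explicit sign blocks and gets an exact formula, while you use the $a|\theta|+b\theta+o(\theta)$ expansion -- so once the mid-paragraph self-corrections are cleaned up, your proof stands as is.
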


\begin{proof}
Assume that $U$ has a 0 entry. By permuting the rows we can assume that this 0 entry is in the first row, having under it a nonzero entry in the second row.

We denote by $U_1,\ldots,U_N$ the rows of $U$. By permuting the columns we can assume that we have a block decomposition of the following type:
$$\begin{pmatrix}U_1\\ U_2\end{pmatrix}
=\begin{pmatrix}
0&0&Y&A&B\\
0&X&0&C&D
\end{pmatrix}$$

Here $X,Y,A,B,C,D$ are certain vectors with nonzero entries, with $A,B,C,D$ chosen such that each entry of $A$ has the same sign as the corresponding entry of $C$, and each entry of $B$ has sign opposite to the sign of the corresponding entry of $D$.

Our above assumption states that $X$ is not the null vector.

For $t>0$ small consider the matrix $U(t)$ obtained by rotating by $t$ the first two rows of $U$. In row notation, this matrix is given by: 
$$U(t)=\begin{pmatrix}
\cos t&\sin t\\
-\sin t&\cos t\\
&&1\\
&&&\ddots\\
&&&&1\end{pmatrix}
\begin{pmatrix}
U_1\\ U_2\\ U_3\\ \ldots\\ U_N
\end{pmatrix}
=\begin{pmatrix}
\cos t\cdot U_1+\sin t\cdot U_2\\ -\sin t\cdot U_1+\cos t\cdot U_2\\ U_3\\ \ldots\\ U_N
\end{pmatrix}$$

We make the convention that the lower-case letters denote the 1-norms of the corresponding upper-case vectors. According to the above sign conventions, we have:
\begin{eqnarray*}
||U(t)||_1
&=&||\cos t\cdot U_1+\sin t\cdot U_2||_1+||-\sin t\cdot U_1+\cos t\cdot U_2||_1+\sum_{i=3}^Nu_i\\
&=&(\cos t+\sin t)(x+y+b+c)+(\cos t-\sin t)(a+d)+\sum_{i=3}^Nu_i\\
&=&||U||_1+(\cos t+\sin t-1)(x+y+b+c)+(\cos t-\sin t-1)(a+d)
\end{eqnarray*}

By using $\sin t=t+O(t^2)$ and $\cos t=1+O(t^2)$ we get:
\begin{eqnarray*}
||U(t)||_1
&=&||U||_1+t(x+y+b+c)-t(a+d)+O(t^2)\\
&=&||U||_1+t(x+y+b+c-a-d)+O(t^2)
\end{eqnarray*}

In order to conclude, we have to prove that $U$ cannot be a local maximizer of the $1$-norm. This will basically follow by comparing the norm of $U$ to the norm of $U(t)$, with $t>0$ small or $t<0$ big. However, since in the above computation it was technically convenient to assume $t>0$, we actually have three cases:

Case 1: $b+c>a+d$. Here for $t>0$ small enough the above formula shows that we have $||U(t)||_1>||U||_1$, and we are done.

Case 2: $b+c=a+d$. Here we use the fact that $X$ is not null, which gives $x>0$. Once again for $t>0$ small enough we have $||U(t)||_1>||U||_1$, and we are done.

Case 3: $b+c<a+d$. In this case we can interchange the first two rows of $U$ and restart the whole procedure: we fall in Case 1, and we are done again.
\end{proof}

\begin{lemma}
Let $S\in M_N(\pm 1)$ and $U\in O(N)$.
\begin{enumerate}
\item $U$ is a critical point of $F(U)=\Sigma S_{ij}U_{ij}$ iff $SU^t$ is symmetric.

\item $U$ is a local maximum of $F$ if and only if $SU^t>0$.
\end{enumerate}
\end{lemma}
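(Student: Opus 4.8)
The plan is to work in the exponential chart on $O(N)$ at $U$: every element of $O(N)$ near $U$ and in the same connected component has the form $e^{X}U$ with $X$ antisymmetric and small, and the tangent vectors at $U$ are exactly the matrices $XU$ with $X$ antisymmetric. So it suffices to analyse the real-analytic function $X\mapsto F(e^{X}U)$ near $X=0$; since $F$ is linear this is $X\mapsto\operatorname{Tr}(S^{t}e^{X}U)=\operatorname{Tr}(e^{X}\cdot US^{t})$.

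First I would differentiate at $X=0$: the first-order term in $X$ is $\operatorname{Tr}(X\cdot US^{t})$, and using that the Frobenius pairing $\operatorname{Tr}(XM)$ of an antisymmetric $X$ with a matrix $M$ detects only the antisymmetric part of $M$, this vanishes for all antisymmetric $X$ exactly when $US^{t}$ is symmetric, i.e. when $SU^{t}=(US^{t})^{t}=US^{t}$ is symmetric; this proves (1). For (2), assume we are at a critical point, so $B:=SU^{t}=US^{t}$ is symmetric, and push the expansion further. All odd-order terms $\operatorname{Tr}(X^{2k+1}B)$ vanish, since an odd power of an antisymmetric matrix is antisymmetric while $B$ is symmetric, so $F(e^{X}U)=\operatorname{Tr}(B)+\tfrac12\operatorname{Tr}(X^{2}B)+O(|X|^{4})$ and the second-order behaviour of $F$ at $U$ is governed by the quadratic form $X\mapsto\operatorname{Tr}(X^{2}B)$ on antisymmetric matrices. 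If $B>0$, write $C=B^{1/2}>0$; from $XC=-(CX)^{t}$ one gets $\operatorname{Tr}(X^{2}B)=\operatorname{Tr}(CX^{2}C)=\operatorname{Tr}(CX\cdot XC)=-\operatorname{Tr}\big((CX)(CX)^{t}\big)\leq 0$, with equality only when $CX=0$, i.e. $X=0$, since $C$ is invertible. Hence the second variation is negative definite and $U$ is a strict local maximum, which is one direction of (2).

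For the converse, a local maximum forces $\operatorname{Tr}(X^{2}B)\leq 0$ for every antisymmetric $X$; testing against $X=e_{i}e_{j}^{t}-e_{j}e_{i}^{t}$ in an eigenbasis of $B$ gives $\mu_{i}+\mu_{j}\geq 0$ for all eigenvalues of $B$, and the remaining job is to promote this to $B\geq 0$, then to $B>0$. This last step is where I expect the main obstacle: the second-order test only excludes pairs of eigenvalues with negative sum, so one still has to rule out the borderline configurations where $B$ is indefinite while every pair of its eigenvalues has nonnegative sum. Along the directions $X$ that annihilate the second variation this requires a finer, fourth-order analysis — expanding $F(e^{X}U)$ one more step and examining $\operatorname{Tr}(X^{4}B)$ — or, alternatively, invoking the global picture: the maximum of $F$ over $O(N)$ equals the sum of the singular values of $S$ and is attained at the orthogonal factor in the polar decomposition of $S$, which is precisely the point where $SU^{t}$ is positive definite. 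Reconciling that global maximizer with the a priori possibility of further local maxima is the crux of the argument.
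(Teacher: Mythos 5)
Your proof of (1) is correct and complete: parametrizing a neighbourhood of $U$ in $O(N)$ by $e^XU$ with $X$ antisymmetric and pairing $X$ against $US^t$ is a clean substitute for the paper's route, which instead runs Lagrange multipliers on the constraints $A_{ij}=\sum_kU_{ik}U_{jk}-\delta_{ij}$ and extracts the symmetry of $SU^t/2$. The forward half of (2) is also correct, and in fact more careful than the paper's: the expansion $F(e^XU)=\mathrm{Tr}(B)+\tfrac12\mathrm{Tr}(X^2B)+O(\|X\|^4)$ with $B=SU^t$, together with $\mathrm{Tr}(X^2B)=-\mathrm{Tr}\bigl((CX)(CX)^t\bigr)$ for $C=B^{1/2}$, does show that $SU^t>0$ forces a strict local maximum, whereas the paper only writes a Hessian-type form $\tfrac12\mathrm{Tr}(Y^t\,U^tS\,Y)$ and leaves the restriction to antisymmetric directions implicit. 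The genuine gap is exactly where you flag it: the converse of (2). Local maximality only yields $\mathrm{Tr}(X^2B)\le 0$ for antisymmetric $X$, i.e.\ $\mu_i+\mu_j\ge 0$ for all pairs of eigenvalues of $B$, and nothing in your sketch bridges the distance from this to $B>0$.

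Moreover, neither of your proposed repairs can close that gap. A fourth-order expansion is hopeless because the troublesome critical points need not be degenerate: if $S$ has a strictly smallest nonzero singular value, write $S=P\Sigma Q^t$ and set $U=PDQ^t$ with $D=\mathrm{diag}(-1,1,\dots,1)$, the $-1$ placed against $\sigma_{\min}$; then $B=SU^t=P\Sigma DP^t$ is symmetric with eigenvalues $-\sigma_{\min},\sigma_2,\dots,\sigma_N$, every pairwise sum of eigenvalues is strictly positive, and since in an eigenbasis $\mathrm{Tr}(X^2B)=-\sum_{i<j}(\mu_i+\mu_j)X_{ij}^2$, the same Taylor argument you used shows $U$ is a \emph{strict} local maximum of $F$ even though $B$ is not positive definite. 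Such sign matrices exist (e.g.\ the $3\times3$ matrix with rows $(1,1,1),(1,-1,1),(1,1,-1)$ has singular values $2,2,1$), so the ``only if'' of (2), read literally for an arbitrary $S\in M_N(\pm1)$, cannot be obtained from any expansion at $U$; and the polar-decomposition argument only locates the global maximizer, not all local ones, which your last sentence correctly identifies as the crux. To be fair, the paper's own proof of (2) is equally one-directional (it only checks definiteness of the Hessian form when $U^tS>0$); the implication actually needed for Theorem 3.3 must exploit the additional relation $S_{ij}=\mathrm{sgn}(U_{ij})$ made available by Lemma 3.1, not the statement for an arbitrary sign matrix $S$.
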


\begin{proof}
We use the basic theory of critical points: Lagrange multipliers for the first part, and the Hessian for the second part.

(1) The orthogonal group consists by definition of the zeroes of the following family of polynomials:
$$A_{ij}=\sum_kU_{ik}U_{jk}-\delta_{ij}$$

We know that $U$ is a critical point of $F$ iff $dF\in span(dA_{ij})$. Since $A_{ij}=A_{ji}$, this is the same as asking for the existence of a symmetric matrix $M$ such that:
\begin{eqnarray*}
dF
&=&\sum_{ij}M_{ij}dA_{ij}\\
&=&\sum_{ijk}M_{ij}(U_{ik}dU_{jk}+U_{jk}dU_{ik})\\
&=&\sum_{jk}(MU)_{jk}dU_{jk}+\sum_{ik}(MU)_{ik}dU_{ik}\\
&=&2\sum_{lk}(MU)_{lk}dU_{lk}
\end{eqnarray*}

On the other hand, by differentiating the formula of $F$ we get:
$$dF=\sum_{lk}S_{lk}dU_{lk}$$

We conclude that $U$ is a critical point of $F$ iff there exists a symmetric matrix $M$ such that $S=2MU$. Now by using the asumption $U\in O(N)$, this condition simply tells us that $M=SU^t/2$ must be symmetric, and we are done.

(2) According to the general theory, the Hessian of $F$ applied to a vector $X=UY$ with $Y\in O(N)$, belonging to the tangent space at $U\in O(N)$, is given by:
\begin{eqnarray*}
Hess(F)(X)
&=&\frac{1}{2}Tr(X^t\cdot SU^t\cdot X)\\
&=&\frac{1}{2}Tr(Y^tU^t\cdot SU^t\cdot UY)\\
&=&\frac{1}{2}Tr(Y^t\cdot U^tS\cdot Y)
\end{eqnarray*}

Thus the Hessian is positive definite when $U^tS$ is positive definite, which is the same as saying that $U(U^tS)U^t=SU^t$ is positive definite. 
\end{proof}

\begin{theorem}
A matrix $U\in O(N)$ locally maximizes the $1$-norm if and only if $SU^t>0$, where $S_{ij}={\rm sgn}(U_{ij})$.
\end{theorem}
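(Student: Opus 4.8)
The plan is to deduce the theorem from the two lemmas above, the only real input being one elementary observation about the $1$-norm. That observation is that $||\cdot||_1$ is piecewise linear on $M_N(\mathbb R)$: on the region where ${\rm sgn}(V_{ij})=S_{ij}$ for all $i,j$, associated to a fixed sign matrix $S\in M_N(\pm 1)$, one has $||V||_1=\sum_{ij}S_{ij}V_{ij}=F(V)$. Hence if $U\in O(N)$ has all entries nonzero, then $S={\rm sgn}(U)$ is a genuine $\pm 1$ matrix and, by continuity of the matrix entries, $||\cdot||_1$ coincides with the linear functional $F$ on a whole neighborhood of $U$ in $O(N)$. So for such $U$, being a local maximizer of $||\cdot||_1$ on $O(N)$ is \emph{the same} as being a local maximizer of $F$ on $O(N)$.

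For the direct implication, I would proceed as follows. Suppose $U\in O(N)$ locally maximizes the $1$-norm. The first lemma above gives $U_{ij}\neq 0$ for all $i,j$, so the observation applies and $U$ locally maximizes the linear functional $F$ attached to $S={\rm sgn}(U)\in M_N(\pm 1)$. The positive-definiteness criterion in the second lemma then gives $SU^t>0$, as desired.

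For the reverse implication, suppose $SU^t>0$ with $S_{ij}={\rm sgn}(U_{ij})$; here one reads $S$ as a genuine $\pm 1$ matrix, so again $U_{ij}\neq 0$ for all $i,j$ and the observation reduces us to $F$. A positive definite $SU^t$ is in particular symmetric, so by the first part of the second lemma $U$ is a critical point of $F$, and by the second part it is a local maximum of $F$; by the observation it is then a local maximum of $||\cdot||_1$. This finishes the argument.

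The one step to be careful with is the bookkeeping of the ``local'' quantifiers together with the nonvanishing of the entries of $U$ --- everything hinges on the fact that, near a matrix with no zero entry, the $1$-norm is honestly a linear function, which is what allows the two lemmas to be glued. Beyond that, I do not expect any genuine difficulty: the analytic substance has already been dealt with in the two lemmas (Lagrange multipliers and the Hessian for the critical-point and maximality conditions, the rotation trick for the nonvanishing of entries).
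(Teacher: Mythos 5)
Your argument is correct and is essentially the paper's own proof, which simply combines Lemma 3.1 (no zero entries) with Lemma 3.2 (the critical point and Hessian criterion for $F$); you have merely made explicit the gluing observation that near a matrix with nonvanishing entries the $1$-norm coincides with the linear functional $F(V)=\sum_{ij}S_{ij}V_{ij}$. No gaps to report.
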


\begin{proof}
This follows from Lemma 3.1 and Lemma 3.2.
\end{proof}

As a first illustration, in the case $U=H/\sqrt{N}$ with $H$ Hadamard we have $S=H$, and the matrix $SU^t=HH^t/\sqrt{N}=I_n/\sqrt{N}$ is indeed positive.

In the case of the matrix $A$ from section 2, the verification goes as follows:
$$SA^t=\frac{1}{3}
\begin{pmatrix}
1&1&1\\
1&1&-1\\
-1&1&-1
\end{pmatrix}
\begin{pmatrix}
1&2&-2\\
2&1&2\\
2&-2&-1
\end{pmatrix}
=\frac{1}{3}
\begin{pmatrix}
5&1&-1\\
1&5&1\\
-1&1&5
\end{pmatrix}$$

The matrix on the right is indeed positive, its eigenvalues being $1,2,2$.

One approach to Problem 1.2 would be by first studying the local maximizers of the 1-norm. We have here the following results.

\begin{proposition}
The local maximizers of the $1$-norm have the following properties:
\begin{enumerate}
\item They are stable by tensor product.

\item At $N=2$ they maximize the $1$-norm.

\item In general, they don't necessarily maximize the $1$-norm.
\end{enumerate}
\end{proposition}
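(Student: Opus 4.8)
The plan is to prove the three parts separately, using Theorem 3.3 as the main tool in (1) and (2), and part (1) together with Proposition 1.1 in (3). For part (1), given local maximizers $U\in O(m)$ and $V\in O(n)$, I would first note that $U\otimes V\in O(mn)$, since $(U\otimes V)(U\otimes V)^t=UU^t\otimes VV^t=I$. By Lemma 3.1 all entries of $U$ and of $V$ are nonzero, so $\mathrm{sgn}(U_{ij}V_{kl})=\mathrm{sgn}(U_{ij})\,\mathrm{sgn}(V_{kl})$; that is, the sign matrix of $U\otimes V$ is $S\otimes T$, where $S=\mathrm{sgn}(U)$ and $T=\mathrm{sgn}(V)$. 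Then $(S\otimes T)(U\otimes V)^t=(SU^t)\otimes(TV^t)$, which is positive definite because $SU^t>0$ and $TV^t>0$ by Theorem 3.3 and a tensor product of positive definite matrices is positive definite. Theorem 3.3 applied to $U\otimes V$ then gives the claim.

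For part (2), by Lemma 3.1 a local maximizer $U\in O(2)$ has all four entries nonzero. I would argue directly: on each open subset of $O(2)$ where the signs of the entries are constant the $1$-norm has the form $2(\pm\cos\theta\pm\sin\theta)$, hence has a unique critical point there, at which it equals $2\sqrt2$; alternatively, writing out $U$ and imposing the symmetry condition of Lemma 3.2 forces $|U_{ij}|=1/\sqrt2$ for all $i,j$. Either way $U=H/\sqrt2$ with $H$ a $2\times 2$ Hadamard matrix, and by Proposition 1.1 this maximizes the $1$-norm.

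For part (3) one needs only a single example, and the plan is to choose it at an order where $K_N$ is known exactly. I would take $U=\tfrac12 H_4\otimes A\in O(12)$, with $H_4$ the $4\times4$ Hadamard matrix and $A$ the $3\times 3$ matrix of Section 2. The matrix $\tfrac12 H_4$ maximizes, hence locally maximizes, the $1$-norm by Proposition 1.1, and $A$ locally maximizes it by the verification in Section 3 (where $SA^t$ was found to have eigenvalues $1,2,2$), so by part (1) $U$ is a local maximizer. However $\|U\|_1=\|\tfrac12 H_4\|_1\cdot\|A\|_1=8\cdot 5=40$, whereas $K_{12}=12\sqrt{12}\approx 41.57$ because a Hadamard matrix of order $12$ exists and so the bound of Proposition 1.1 is attained at $N=12$. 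Thus $\|U\|_1<K_{12}$, and $U$ does not maximize the $1$-norm.

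The only real obstacle is in part (3): everything hinges on certifying non-maximality, which forces the example to sit at an order $N$ for which $K_N$ is pinned down — here $N=12$, a Hadamard order. At $N=6$ or $N=9$ one has the equally natural local maximizers $\tfrac1{\sqrt2}H_2\otimes A$ and $A\otimes A$, but turning them into counterexamples would require proving $K_6>10\sqrt2$ or $K_9>25$, i.e.\ constructing a strictly better orthogonal matrix, which the tools at hand do not obviously provide; sticking to a Hadamard order sidesteps this entirely.
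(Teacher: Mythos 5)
Your proposal is correct and follows essentially the same route as the paper: the same tensor-product computation $(S\otimes T)(U\otimes V)^t=(SU^t)\otimes(TV^t)$ for (1), a critical-point analysis on $O(2)$ forcing $|U_{ij}|=1/\sqrt2$ for (2), and the same counterexample $A\otimes(H_4/2)\in O(12)$ for (3). You are in fact slightly more explicit than the paper in (3), spelling out that $K_{12}=12\sqrt{12}$ (since a Hadamard matrix of order $12$ exists) exceeds $\|U\|_1=40$, a step the paper leaves implicit.
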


\begin{proof}
(1) Assume indeed that $U,V$ maximize the 1-norm, and let $S,X$ be the corresponding sign matrices. The tensor product is given by $W_{ia,jb}=U_{ij}V_{ab}$, with sign matrix $Y_{ia,jb}=S_{ij}X_{ab}$, and the result follows from the following computation:
\begin{eqnarray*}
(YW^t)_{ia,jb}
&=&\sum_{kc}Y_{ia,kc}W_{jb,kc}\\
&=&\sum_{kc}S_{ik}X_{ac}U_{jk}V_{bc}\\
&=&\sum_kS_{ik}U_{jk}\sum_cX_{ac}V_{bc}\\
&=&(SU^t)_{ij}(XV^t)_{ab}
\end{eqnarray*}

(2) By using the equivalence relation, we can assume that the matrix is a rotation of angle $t\in [0,\pi/2]$. With $a=\cos t$, $b=\sin t$ we have:
$$SU^t=\begin{pmatrix}1&1\\ -1&1\end{pmatrix}
\begin{pmatrix}a&-b\\ b&a\end{pmatrix}
=\begin{pmatrix}a+b&a-b\\ b-a&a+b\end{pmatrix}$$

Thus we have $a=b$, so $U=S/\sqrt{2}$, and $S$ is Hadamard.

(3) Consider indeed the matrix $U=A\otimes(H/2)$, where $A$ is the $3\times 3$ matrix found in section 2, and $H$ is a $4\times 4$ Hadamard matrix. Then $U$ locally maximizes the 1-norm on $O(12)$, but is not a multiple of an Hadamard matrix. 
\end{proof}

The local maximizers don't seem to be stable under more general operations.

One problem is to compute these local maximizers at $N=3$. Observe that, by using the equivalence relation, we can assume that $U\in O(3)$ has the following sign matrix:
$$S=\begin{pmatrix}1&1&1\\ 1&-1&1\\ 1&1&-1\end{pmatrix}$$

Equivalently, we can assume that $U\in SO(3)$ has the following sign matrix:
$$S=\begin{pmatrix}1&1&1\\ 1&-1&\pm 1\\ 1&1&-1\end{pmatrix}$$

So, in principle we have 2 concrete problems over $SO(3)$ to be solved. However, the computation with the Euler-Rodrigues formula seems to be quite difficult, and we do not have further results in this direction.

\section{Norm estimates}

Our purpose here is to investigate the following quantity:
$$K_N=\sup_{U\in O(N)}||U||_1$$

The motivation comes from the Hadamard conjecture, because we have $K_N\leq N\sqrt{N}$, with equality if and only if there exists an Hadamard matrix of order $N$.

We begin our study with the following observation.

\begin{proposition}
Assume that the Hadamard conjecture holds. Then we have the estimate $K_N\geq (N-4.5)\sqrt{N}$, valid for any $N\in\mathbb N$.
\end{proposition}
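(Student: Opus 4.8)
The plan is to start from any $N$ and find a nearby order $M$ at which an Hadamard matrix exists, then use that Hadamard matrix to build an orthogonal $N\times N$ matrix whose 1-norm is close to $M\sqrt M$. The key input is a gap estimate on the distribution of Hadamard orders: assuming the Hadamard conjecture, Hadamard matrices exist for all $M\in 4\mathbb N$, so given $N$ there is a multiple of $4$, call it $M$, with $M\leq N$ and $N-M\leq 3$ (in fact one can always take $0\le N-M\le 3$). Actually, to get the claimed constant $4.5$ it is cleaner to allow $M$ slightly below $N$ and estimate carefully; the worst case is roughly $N-M=3$.

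The construction is as follows. Let $H$ be an $M\times M$ Hadamard matrix, so $H/\sqrt M\in O(M)$ has $\|H/\sqrt M\|_1=M\sqrt M$. Form the block-diagonal matrix
\[
U=\begin{pmatrix}H/\sqrt M&0\\0&I_{N-M}\end{pmatrix}\in O(N).
\]
Then $\|U\|_1=M\sqrt M+(N-M)$. Now I would compare $M\sqrt M+(N-M)$ with $(N-4.5)\sqrt N$. Writing $M=N-r$ with $r\in\{0,1,2,3\}$, one has $M\sqrt M=(N-r)^{3/2}$, and a Taylor expansion (or a direct convexity/monotonicity argument) gives $(N-r)^{3/2}\ge N\sqrt N - \tfrac32 r\sqrt N$ for $N$ large, hence $\|U\|_1\ge N\sqrt N-\tfrac32 r\sqrt N+(N-r)\ge (N-\tfrac32 r)\sqrt N$. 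With $r\le 3$ this yields $\|U\|_1\ge(N-4.5)\sqrt N$, which is exactly the claimed bound. One should check the small-$N$ cases separately, or note that for small $N$ the bound $(N-4.5)\sqrt N$ may be negative or trivially satisfied.

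The main obstacle — really the only subtle point — is making the elementary inequality $(N-r)^{3/2}+(N-r)\ge (N-4.5)\sqrt N$ hold for \emph{all} $N\in\mathbb N$ and not just asymptotically, since the $O$-term in the expansion of $(N-r)^{3/2}$ has the wrong sign. I would handle this by writing $(N-r)^{3/2}=N\sqrt N(1-r/N)^{3/2}$ and using the exact inequality $(1-x)^{3/2}\ge 1-\tfrac32 x$ for $x\in[0,1]$ (which follows from concavity of $x\mapsto(1-x)^{3/2}$, its value and derivative at $0$), giving $(N-r)^{3/2}\ge N\sqrt N-\tfrac32 r\sqrt N$ with no error term at all. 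Then $\|U\|_1\ge N\sqrt N-\tfrac32 r\sqrt N+(N-r)\ge(N-\tfrac32 r)\sqrt N\ge(N-4.5)\sqrt N$ since $r\le 3$ and $N-r\ge 0$. The one place needing care is the claim that some multiple of $4$ lies in $\{N-3,\dots,N\}$: this is immediate, but one must also ensure $M\ge 2$ (or handle $N\le 3$ by hand) so that an Hadamard matrix of order $M$ is actually available. This completes the argument.
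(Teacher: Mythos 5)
Your proof is correct and takes essentially the same route as the paper: the same block construction $\begin{pmatrix}H/\sqrt{M}&0\\ 0&I_{N-M}\end{pmatrix}$ with $M$ the largest multiple of $4$ not exceeding $N$, followed by an elementary comparison with $(N-4.5)\sqrt{N}$ (the paper verifies $(N-3)\sqrt{N-3}\geq (N-4.5)\sqrt{N}$ by squaring, while you use the exact bound $(1-x)^{3/2}\geq 1-\frac{3}{2}x$). One small slip: that bound comes from the \emph{convexity} of $x\mapsto (1-x)^{3/2}$ on $[0,1]$ (a convex function lies above its tangent at $0$), not concavity, but the inequality you use is correct and the argument stands.
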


\begin{proof}
If $N$ is a multiple of $4$ we can use an Hadamard matrix, and we are done. In general, we can write $N=M+k$ with $4|M$ and $0\leq k\leq 3$, and use an Hadamard matrix of order $N$, completed with an identity matrix of order $k$. This gives:
\begin{eqnarray*}
K_N
&\geq&
M\sqrt{M}+k\\
&\geq&(N-3)\sqrt{N-3}+3\\
&\geq&(N-4.5)\sqrt{N}+3
\end{eqnarray*}

Here the last inequality, proved by taking squares, is valid for any $N\geq 5$. 
\end{proof}

Observe that our method can be slightly improved, by using the $2\times2$ and $3\times 3$ maximizers of the 1-norm at $k=2,3$. This leads to the following estimate:
$$K_N\geq (N-3)\sqrt{N-3}+5$$

Of course, this won't improve the $4.5$ constant in Proposition 4.1.

In the reminder of this section we discuss the following related question, which is of course of great theoretical importance.

\begin{problem}
Which estimates on $K_N$ imply the Hadamard conjecture?
\end{problem}

More precisely, given $N\in\mathbb N$, we know that we have $K_N\leq N\sqrt{N}$, with equality if and only if the Hadamard conjecture holds at $N$. The problem is find a good numerical bound $B_N<N\sqrt{N}$ such that $K_N\geq B_N$ implies the Hadamard conjecture at $N$.

We present here a straighforward approach to the problem. We use the Cauchy-Schwarz inequality and the basic properties of the 2-norm, that we denote $||\ \,||$.

\begin{lemma}
For any norm one vector $U\in\mathbb R^N$ we have the formula
$$||U||_1=\sqrt{N}\left(1-\frac{||U-H||^2}{2}\right)$$
where $H\in\mathbb R^N$ is the vector given by $H_i={\rm sgn}(U_i)/\sqrt{N}$.  
\end{lemma}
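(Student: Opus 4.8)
The plan is to expand both sides in terms of the coordinates of $U$ and reduce the identity to the defining property of $H$. First I would note that since $U$ is a norm one vector, $\sum_i U_i^2 = 1$, and since each $H_i = \mathrm{sgn}(U_i)/\sqrt N$, we have $\sum_i H_i^2 = N \cdot (1/N) = 1$, so $H$ is also a unit vector. The key observation is that $\langle U, H\rangle = \sum_i U_i \cdot \mathrm{sgn}(U_i)/\sqrt N = \frac{1}{\sqrt N}\sum_i |U_i| = \frac{1}{\sqrt N}\,||U||_1$.

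Next I would write out the squared distance: $||U-H||^2 = ||U||^2 - 2\langle U,H\rangle + ||H||^2 = 1 - 2\langle U,H\rangle + 1 = 2 - \frac{2}{\sqrt N}||U||_1$. Rearranging gives $\frac{||U-H||^2}{2} = 1 - \frac{1}{\sqrt N}||U||_1$, hence $\frac{1}{\sqrt N}||U||_1 = 1 - \frac{||U-H||^2}{2}$, and multiplying through by $\sqrt N$ yields exactly the claimed formula $||U||_1 = \sqrt N\left(1 - \frac{||U-H||^2}{2}\right)$.

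I do not anticipate a genuine obstacle here: the statement is essentially a bookkeeping identity, and the only thing to be careful about is the case where some $U_i = 0$, so that $\mathrm{sgn}(U_i) = 0$ and $H_i = 0$. In that situation the term $U_i H_i = 0$ still equals $|U_i| = 0$, so the computation of $\langle U, H \rangle$ is unaffected; however $||H||^2$ then drops below $1$ (it equals $m/N$ where $m$ is the number of nonzero entries). To keep the clean form of the identity one should either assume all $U_i \neq 0$ — which is the only case of interest by Lemma 3.1, since local maximizers have no zero entries — or observe that the statement as used is applied to rows of such maximizers. I would add a one-line remark to this effect rather than complicate the formula. The upshot is that the proof is three lines of inner-product algebra, with the mild caveat about vanishing entries handled by a parenthetical comment.
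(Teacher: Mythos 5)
Your proof is correct and follows essentially the same route as the paper: the paper expands $||U-H||^2$ coordinate-wise to get $||U||^2-2||U||_1/\sqrt{N}+1=2-2||U||_1/\sqrt{N}$, which is exactly your inner-product computation. Your remark about entries with $U_i=0$ (equivalently, about the convention for ${\rm sgn}(0)$) is a reasonable extra precaution that the paper silently glosses over.
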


\begin{proof}
We have:
\begin{eqnarray*}
||U-H||^2
&=&\sum_i(U_i-{\rm sgn}(U_i)/\sqrt{N})^2\\
&=&\sum_iU_i^2-2|U_i|/\sqrt{N}+1/N\\
&=&||U||^2-2||U||_1/\sqrt{N}+1\\
&=&2-2||U||_1/\sqrt{N}
\end{eqnarray*}

This gives the result.
\end{proof}

\begin{lemma}
Let $N$ be even, and $U\in O(N)$ be such that $H=S/\sqrt{N}$ is not Hadamard, where $S_{ij}={\rm sgn}(U_{ij})$. Then $||U||_1\leq N\sqrt{N}-1/(N\sqrt{N})$.
\end{lemma}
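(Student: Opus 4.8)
The plan is to bound $\|U\|_1=\sum_{i=1}^N\|U_i\|_1$ one row at a time — where $U_1,\dots,U_N$ are the rows of $U$ — using that each $U_i$ is a unit vector, and to gain a fixed positive amount from two rows of $U$ whose sign patterns fail to be orthogonal.

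First I would reformulate the hypothesis. Write $S_1,\dots,S_N$ for the rows of $S$. The matrix $H=S/\sqrt N$ fails to be Hadamard (i.e.\ $S$ is not a Hadamard matrix) exactly when some two distinct rows $S_p,S_q$ are not orthogonal, say $r:=\langle S_p,S_q\rangle\neq 0$. Since the entries of $S$ are $\pm1$ and $N$ is even, the sum of $N$ such products has the same parity as $N$, so $r$ is an even integer and $2\leq|r|\leq N$; this is the only place where the parity of $N$ enters.

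Next comes the key estimate for this pair of rows. Because $U\in O(N)$ we have $\langle U_p,U_q\rangle=0$, hence $\|U_p\pm U_q\|^2=2$, while $\|S_p\pm S_q\|^2=2N\pm 2r$. Since $S_p={\rm sgn}(U_p)$ entrywise, $\langle U_p,S_p\rangle=\|U_p\|_1$ and likewise $\langle U_q,S_q\rangle=\|U_q\|_1$. Applying the Cauchy--Schwarz inequality to the pair $(U_p+U_q,\,S_p+S_q)$ and to the pair $(U_p-U_q,\,S_p-S_q)$ and adding, the cross terms $\langle U_p,S_q\rangle+\langle U_q,S_p\rangle$ cancel and one obtains
$$\|U_p\|_1+\|U_q\|_1\ \leq\ \sqrt{N+r}+\sqrt{N-r}\ \leq\ \sqrt{N+2}+\sqrt{N-2},$$
the last step since $s\mapsto\sqrt{N+s}+\sqrt{N-s}$ is decreasing in $|s|$ on $[0,N]$ and $|r|\geq 2$. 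For each of the remaining rows, Lemma 4.4 (equivalently, Cauchy--Schwarz on the unit vector $U_i$) gives $\|U_i\|_1\leq\sqrt N$. Summing over all $N$ rows,
$$\|U\|_1\ \leq\ \sqrt{N+2}+\sqrt{N-2}+(N-2)\sqrt N.$$

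It then remains to verify the scalar inequality $\sqrt{N+2}+\sqrt{N-2}+(N-2)\sqrt N\leq N\sqrt N-1/(N\sqrt N)$, that is, $\sqrt{N+2}+\sqrt{N-2}\leq 2\sqrt N-N^{-3/2}$. Both sides being nonnegative for $N\geq 2$, squaring reduces this to $\sqrt{N^2-4}\leq N-2N^{-1}+\frac12N^{-3}$, and squaring once more reduces it to $0\leq 5N^{-2}-2N^{-4}+\frac14N^{-6}$, i.e.\ $0\leq 5N^4-2N^2+\frac14$, which holds for every $N$. I expect this last check to be the only real subtlety: the lemma is essentially sharp — already for $N=4$ the two sides differ by about $0.01$ — so a cruder intermediate estimate (for instance bounding $\|U-H\|^2$ by the squared Frobenius distance from $H$ to $O(N)$) loses a constant factor and does not reach $1/(N\sqrt N)$. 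One has to pair the two offending rows of $U$ directly against the corresponding rows of $S$ and carry the quantity $\sqrt{N+2}+\sqrt{N-2}$ through without slack.
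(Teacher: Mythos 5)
Your proof is correct, and it follows the same overall skeleton as the paper's argument — locate two rows whose sign patterns fail to be orthogonal, use the evenness of $N$ to get the quantitative gap $|\langle S_p,S_q\rangle|\geq 2$ (the paper phrases this as $|\langle H_1,H_2\rangle|\geq 2/N$), and bound each of the remaining $N-2$ rows by $\sqrt N$ — but the key two-row estimate is obtained by a genuinely different mechanism. The paper goes through its Lemma 4.3, $\|U_i\|_1=\sqrt N\bigl(1-\tfrac12\|U_i-H_i\|^2\bigr)$, proves $\|U_1-H_1\|+\|U_2-H_2\|\geq 2/N$ by a cancellation of cross terms in suitable inner products, and then uses convexity of the square to get $\|U_1\|_1+\|U_2\|_1\leq 2\sqrt N-1/(N\sqrt N)$, so the stated bound drops out with no further numerical work. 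You instead apply Cauchy--Schwarz to the pairs $(U_p+U_q,\,S_p+S_q)$ and $(U_p-U_q,\,S_p-S_q)$, where the same cross terms cancel upon adding, obtaining $\|U_p\|_1+\|U_q\|_1\leq\sqrt{N+r}+\sqrt{N-r}\leq\sqrt{N+2}+\sqrt{N-2}$; this is slightly sharper than the paper's pair bound and bypasses Lemma 4.3 entirely, but it shifts the burden to the closing scalar inequality $\sqrt{N+2}+\sqrt{N-2}\leq 2\sqrt N-N^{-3/2}$, which you verify correctly (two squarings reduce it to $5N^4-2N^2+\tfrac14\geq 0$, which always holds). So the paper's route buys an immediate arrival at the exact constant $1/(N\sqrt N)$, while yours buys a cleaner and marginally stronger intermediate estimate at the cost of an elementary but necessary final check. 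One cosmetic point: your citation of ``Lemma 4.4'' for the bound $\|U_i\|_1\leq\sqrt N$ on the remaining rows should be to Lemma 4.3 (or, as you note, simply to Cauchy--Schwarz applied to the unit vector $U_i$), since Lemma 4.4 is the statement being proved.
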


\begin{proof}
Since $H$ is not Hadamard, this matrix has two distinct rows $H_1,H_2$ which are not orthogonal. Since $N$ is even, we must have $|<H_1,H_2>|\geq 2/N$. We get:
\begin{eqnarray*}
||U_1-H_1||+||U_2-H_2||
&\geq&|<U_1-H_1,H_2>|+|<U_2-H_2,U_1>|\\
&\geq&|<U_1-H_1,H_2>+<U_2-H_2,U_1>|\\
&=&|<U_2,U_1>-<H_1,H_2>|\\
&=&|<H_1,H_2>|\\
&\geq&2/N
\end{eqnarray*}

Now by applying Lemma 4.3 to $U_1,U_2$, we get:
\begin{eqnarray*}
||U_1||_1+||U_2||_1
&=&\sqrt{N}\left(2-\frac{||U_1-H_1||^2+||U_2-H_2||^2}{2}\right)\\
&\leq&\sqrt{N}\left(2-\left(\frac{||U_1-H_1||+||U_2-H_2||}{2}\right)^2\right)\\
&\leq&\sqrt{N}\left(2-\frac{1}{N^2}\right)\\
&=&2\sqrt{N}-\frac{1}{N\sqrt{N}}
\end{eqnarray*}

By adding to this inequality the 1-norms of the remaining $N-2$ rows, all bounded from above by $\sqrt{N}$, we obtain the result.  
\end{proof}

\begin{theorem}
If $N$ is even and $K_N\geq N\sqrt{N}-1/(N\sqrt{N})$, the Hadamard conjecture holds at $N$.
\end{theorem}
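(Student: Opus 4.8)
The plan is to read this off from Lemma 4.5 by contraposition, with compactness of $O(N)$ supplying the maximizer and a small strengthening of Lemma 4.5 to a strict inequality disposing of the borderline equality.

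First I would note that, $O(N)$ being compact and $U\mapsto ||U||_1$ continuous, the supremum defining $K_N$ is attained: fix $U\in O(N)$ with $||U||_1=K_N$. By Lemma 3.1 this $U$ has no zero entries, so $S\in M_N(\pm1)$ given by $S_{ij}={\rm sgn}(U_{ij})$ is well defined, and so is $H=S/\sqrt N$. If $H$ is Hadamard we are done, since then $N$ admits a Hadamard matrix. If $H$ is not Hadamard, then -- using that $N$ is even -- Lemma 4.5 gives $K_N=||U||_1\leq N\sqrt N-1/(N\sqrt N)$, which together with the hypothesis $K_N\geq N\sqrt N-1/(N\sqrt N)$ forces the equality $||U||_1=N\sqrt N-1/(N\sqrt N)$. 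Thus the theorem reduces to the assertion that this equality is impossible when $H$ is not Hadamard, i.e.\ that the inequality of Lemma 4.5 is in fact strict.

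To settle that point the plan is to retrace the proof of Lemma 4.5 and record the equality conditions. Let $\{1,2\}$ be the non-orthogonal pair of rows of $H$ used there, and set $\varepsilon_i=||U_i-H_i||$. Equality in $\sum_i||U_i||_1=N\sqrt N-1/(N\sqrt N)$ forces, via Lemma 4.3, that $U_i=H_i$ for all $i\geq3$, that $\varepsilon_1=\varepsilon_2=1/N$, and that the two Cauchy--Schwarz steps are tight, i.e.\ $U_1-H_1$ is proportional to $H_2$ and $U_2-H_2$ is proportional to $U_1$. From $U_i=H_i$ for $i\geq3$ one gets that $H_3,\dots,H_N$ are pairwise orthogonal vectors of norm $1$; moreover $\langle U_1,H_i\rangle=\langle U_1,U_i\rangle=0$ for $i\geq3$, and feeding this into the two proportionalities gives $\langle H_2,H_i\rangle=0$ and then $\langle H_1,H_i\rangle=0$ for every $i\geq3$. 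Hence $U_1,U_2,H_1,H_2$ all lie in the $2$-dimensional space $V$, the orthogonal complement of the span of $H_3,\dots,H_N$, and $U_1,U_2$ form an orthonormal basis of $V$. Writing $U_1=H_1+cH_2$ with $|c|=\varepsilon_1=1/N$ and expanding $||U_1||^2=1$ (using $||H_1||=||H_2||=1$) yields $\langle H_1,H_2\rangle=-c/2$, so $|\langle H_1,H_2\rangle|=1/(2N)$. But $N$ is even and $\langle H_1,H_2\rangle\neq0$, so $N\langle H_1,H_2\rangle$ is a nonzero even integer and $|\langle H_1,H_2\rangle|\geq 2/N$ -- a contradiction. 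So the borderline case cannot occur, and the proof is complete.

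I expect the only part requiring genuine care to be this last step, the bookkeeping of the equality cases in Lemma 4.5 and the short linear-algebra computation in the space $V$; the reduction itself (compactness together with the contrapositive of Lemma 4.5) is entirely routine.
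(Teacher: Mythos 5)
Your proposal is correct, and it is built around the same key lemma as the paper, but it does genuinely more work than the paper's own proof. The paper's argument is a one-line contraposition: if the Hadamard conjecture fails at $N$, then the hypothesis of Lemma 4.4 holds for every $U\in O(N)$, hence $K_N\leq N\sqrt{N}-1/(N\sqrt{N})$. Strictly speaking, this only rules out $K_N>N\sqrt{N}-1/(N\sqrt{N})$ and leaves the borderline case $K_N=N\sqrt{N}-1/(N\sqrt{N})$ untouched, whereas the theorem is stated with ``$\geq$''. Your proof closes exactly that gap: compactness of $O(N)$ gives a maximizer $U$, Lemma 3.1 guarantees $S\in M_N(\pm 1)$ (also needed for the identity of Lemma 4.3 to apply literally, since that identity uses $\sum_i{\rm sgn}(U_i)^2=N$), and your equality-case bookkeeping in Lemma 4.4 is accurate: equality forces $U_i=H_i$ for $i\geq 3$, $\|U_1-H_1\|=\|U_2-H_2\|=1/N$, and tight Cauchy--Schwarz, so $U_1=H_1+cH_2$ with $|c|=1/N$, whence $\|U_1\|=\|H_1\|=\|H_2\|=1$ gives $|\langle H_1,H_2\rangle|=1/(2N)$, contradicting the parity bound $|\langle H_1,H_2\rangle|\geq 2/N$ for $N$ even. (The excursion through the two-dimensional space $V$ is harmless but not needed; the computation with $U_1=H_1+cH_2$ already suffices.) Two cosmetic points: the lemma you invoke is Lemma 4.4, not 4.5 (4.5 is the theorem itself), and when $H$ is not Hadamard you should note, as you implicitly do, that the equality case is impossible for \emph{that particular} maximizer, which is all that is needed. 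In short: same skeleton as the paper, with an extra equality analysis that actually strengthens the paper's one-line proof to cover the stated ``$\geq$'' hypothesis.
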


\begin{proof}
If the Hadamard conjecture doesn't hold at $N$, then the assumption of Lemma 4.4 is satisfied for any $U\in O(N)$, and this gives the result.
\end{proof}

In principle the above estimate can be slightly improved, by using the fact that the local maximizers of the $1$-norm satisfy the condition $SU^t>0$, coming from Theorem 3.3. However, there is probably very small room for improvements, for instance because at $N=3$ we have $N\sqrt{N}-1/(N\sqrt{N})=5.003..$, which is very close to $K_3=5$.

\section{Spherical integrals}

As explained in the introduction, the systematic study of $K_N$ can be done by computing certain integrals on $O(N)$, corresponding to the moments of the 1-norm. 

In this section we compute the simplest such integral, namely the average of the 1-norm. This will lead to the estimate $K_N\geq 0.797N\sqrt{N}$, mentioned in the introduction.

We denote by $S^{N-1}\subset\mathbb R^N$ the usual sphere, with coordinates $x_1,\ldots,x_N$, and taken with the uniform measure of mass 1. 

We use the notation $m!!=(m-1)(m-3)(m-5)\ldots$, with the product ending at 2 if $m$ is odd, and ending at 1 if $m$ is even.

\begin{lemma}
For any $k_1,\ldots,k_p\in\mathbb N$ we have
$$\int_{S^{N-1}}\left|x_1^{k_1}\ldots x_p^{k_p}\right|\,dx=\left(\frac{2}{\pi}\right)^{\Sigma(k_1,\ldots,k_p)}\frac{(N-1)!!k_1!!\ldots k_p!!}{(N+\Sigma k_i-1)!!}$$
with $\Sigma=[odds/2]$ if $N$ is odd and $\Sigma=[(odds+1)/2]$ if $N$ is even, where ``odds'' denotes the number of odd numbers in the sequence $k_1,\ldots,k_p$.
\end{lemma}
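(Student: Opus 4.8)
The plan is to reduce the statement to a one-variable Gaussian computation by passing to polar coordinates. Write $K=\Sigma k_i$ and let $\sigma_N$ denote the total surface area of $S^{N-1}$. Writing a point of $\mathbb{R}^N$ as $x=r\omega$ with $r\ge 0$, $\omega\in S^{N-1}$, and using the homogeneity of the integrand, one obtains
$$\int_{\mathbb{R}^N}|x_1|^{k_1}\cdots|x_p|^{k_p}\,e^{-|x|^2/2}\,dx=\sigma_N\left(\int_0^\infty r^{N+K-1}e^{-r^2/2}\,dr\right)\int_{S^{N-1}}\left|x_1^{k_1}\cdots x_p^{k_p}\right|dx ,$$
where the last integral is taken with respect to the mass-one measure. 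By Fubini the left-hand side factors as $\prod_{i=1}^{p}G(k_i)\cdot(\sqrt{2\pi})^{N-p}$, where $G(k)=\int_{\mathbb{R}}|t|^{k}e^{-t^2/2}\,dt$, while the radial factor on the right is $R(N+K)$, where $R(a)=\int_0^\infty r^{a-1}e^{-r^2/2}\,dr$. Specializing all $k_i=0$ gives $(\sqrt{2\pi})^{N}=\sigma_N R(N)$, which pins down $\sigma_N$; dividing the general identity by this special case cancels $\sigma_N$ and leaves
$$\int_{S^{N-1}}\left|x_1^{k_1}\cdots x_p^{k_p}\right|dx=\left(\prod_{i=1}^{p}\frac{G(k_i)}{\sqrt{2\pi}}\right)\cdot\frac{R(N)}{R(N+K)} .$$

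I would then evaluate $G$ and $R$ by the elementary recursions coming from integration by parts: $G(k)=(k-1)G(k-2)$ for $k\ge 2$, and $R(a)=(a-2)R(a-2)$ for $a>2$, with base values $G(0)=\sqrt{2\pi}$, $G(1)=2$, $R(1)=\sqrt{\pi/2}$, $R(2)=1$. Telescoping each down to its base according to parity and using the paper's convention $m!!=(m-1)(m-3)\cdots$, one gets $G(k)=\sqrt{2\pi}\,k!!$ for $k$ even and $G(k)=2\,k!!$ for $k$ odd, hence $G(k)/\sqrt{2\pi}=k!!$ for $k$ even and $\sqrt{2/\pi}\,k!!$ for $k$ odd, so that $\prod_{i}G(k_i)/\sqrt{2\pi}=(k_1!!\cdots k_p!!)\,(\sqrt{2/\pi})^{\mathrm{odds}}$. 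Similarly $R(N)$ telescopes to $(N-1)!!$ times $R(1)$ or $R(2)$ according as $N$ is odd or even, and likewise for $R(N+K)$, so $R(N)/R(N+K)$ equals $\frac{(N-1)!!}{(N+K-1)!!}$ times a ratio of base values.

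Putting the pieces together, the formula reduces to the claim that $(\sqrt{2/\pi})^{\mathrm{odds}}$ times this ratio of base values equals $(2/\pi)^{\Sigma}$, with $\Sigma$ as stated, and this parity bookkeeping is the one genuinely case-dependent step, hence the main (if minor) obstacle. Since $N+K\equiv N+\mathrm{odds}\pmod 2$, there are three cases: if $\mathrm{odds}$ is even the two base values agree and their ratio is $1$; if $\mathrm{odds}$ is odd and $N$ is odd the ratio is $R(1)/R(2)=\sqrt{\pi/2}$, which absorbs one factor $\sqrt{2/\pi}$; and if $\mathrm{odds}$ is odd and $N$ is even the ratio is $R(2)/R(1)=\sqrt{2/\pi}$, which contributes one more. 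In the three cases the surviving power of $2/\pi$ comes out to $\mathrm{odds}/2$, $(\mathrm{odds}-1)/2$, $(\mathrm{odds}+1)/2$, which is precisely $[\mathrm{odds}/2]$ when $N$ is odd and $[(\mathrm{odds}+1)/2]$ when $N$ is even. Together with the double-factorial factor $\frac{(N-1)!!\,k_1!!\cdots k_p!!}{(N+K-1)!!}$ this gives the stated formula; the one thing requiring care is to keep the shifted double-factorial convention consistent throughout the telescoping.
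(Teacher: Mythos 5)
Your proof is correct, and it takes a genuinely different route from the paper. The paper parametrizes the sphere explicitly by spherical coordinates, reduces the integral to a product of $N-1$ one-dimensional Wallis-type integrals $\int_0^{\pi/2}\cos^p t\sin^q t\,dt$, and then has to track the resulting exponents of $2/\pi$ through the quantity $\Delta(k_1,\ldots,k_N)=\sum_i\delta(k_i,k_{i+1}+\ldots+k_N+N-i-1)$, whose identification with $\Sigma-[N/2]$ is only sketched there (``a number of simple observations'' about the $\delta$ sequence having no two consecutive zeroes). You instead use the standard Gaussian trick: factor $\int_{\mathbb R^N}\prod_i|x_i|^{k_i}e^{-|x|^2/2}dx$ both by Fubini and by polar coordinates, normalize by the case $k_i\equiv 0$ to eliminate the surface area, and evaluate the one-variable integrals $G(k)$ and $R(a)$ by elementary two-step recursions; I checked your base values $G(0)=\sqrt{2\pi}$, $G(1)=2$, $R(1)=\sqrt{\pi/2}$, $R(2)=1$, the identification $G(k)/\sqrt{2\pi}=k!!$ or $\sqrt{2/\pi}\,k!!$ and $R(N)=(N-1)!!\,R(\epsilon_N)$ in the paper's shifted double-factorial convention, and the final three-case parity count, which does reproduce $\Sigma=[\mathrm{odds}/2]$ for $N$ odd and $[(\mathrm{odds}+1)/2]$ for $N$ even. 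What your approach buys is that the delicate exponent bookkeeping collapses to a single comparison of base values $R(1)$ versus $R(2)$ at the very end, avoiding the paper's combinatorial argument about $\Delta$; what the paper's approach buys is a self-contained computation on the sphere that also records the $N=2$ Wallis formula in the exact form it reuses, but your argument is if anything more complete on the one step the paper leaves to the reader.
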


\begin{proof}
We use the following notation: $\delta(a,b)=0$ if both $a,b$ are even, and $\delta(a,b)=1$ if not. Observe that we have $\delta(a,b)=[(odds(a,b)+1)/2]$.

As a first observation, the result holds indeed at $N=2$, due to the following well-known formula:
$$\frac{2}{\pi}\int_0^{\pi/2}\cos^pt\sin^qt\,dt=\left(\frac{2}{\pi}\right)^{\delta(p,q)}\frac{p!!q!!}{(p+q+1)!!}$$

Let us discuss now that general case. According to the general theory, the integral in the statement can be written in spherical coordinates, as follows:
$$I=\frac{2^N}{V}\int_0^{\pi/2}\ldots\int_0^{\pi/2}x_1^{k_1}\ldots x_N^{k_N}J\,dt_1\ldots dt_{N-1}$$

Here $V$ is the volume of the sphere, $J$ is the Jacobian, and the $2^N$ factor comes from the restriction to the $1/2^N$ part of the sphere where all the coordinates are positive.

The normalization constant in front of the integral is:
$$\frac{2^N}{V}
=\frac{2^N}{N\pi^{N/2}}\cdot\Gamma\left(\frac{N}{2}+1\right)
=\left(\frac{2}{\pi}\right)^{[N/2]}(N-1)!!$$

As for the unnormalized integral, this is given by:
\begin{eqnarray*}
I'=\int_0^{\pi/2}\ldots\int_0^{\pi/2}
&&(\cos t_1)^{k_1}\\
&&(\sin t_1\cos t_2)^{k_2}\\
&&\ldots\\
&&(\sin t_1\sin t_2\ldots\sin t_{N-2}\cos t_{N-1})^{k_{N-1}}\\
&&(\sin t_1\sin t_2\ldots\sin t_{N-2}\sin t_{N-1})^{k_N}\\
&&\sin^{N-2}t_1\sin^{N-3}t_2\ldots\sin^2t_{N-3}\sin t_{N-2}\\
&&dt_1\ldots dt_{N-1}
\end{eqnarray*}

By rearranging the terms, we get:
\begin{eqnarray*}
I'
&=&\int_0^{\pi/2}\cos^{k_1}t_1\sin^{k_2+\ldots+k_N+N-2}t_1\,dt_1\\
&&\int_0^{\pi/2}\cos^{k_2}t_2\sin^{k_3+\ldots+k_N+N-3}t_2\,dt_2\\
&&\ldots\\
&&\int_0^{\pi/2}\cos^{k_{N-2}}t_{N-2}\sin^{k_{N-1}+k_N+1}t_{N-2}\,dt_{N-2}\\
&&\int_0^{\pi/2}\cos^{k_{N-1}}t_{N-1}\sin^{k_N}t_{N-1}\,dt_{N-1}
\end{eqnarray*}

Now by using the formula at $N=2$, we get:
\begin{eqnarray*}
I'
&=&\frac{\pi}{2}\cdot\frac{k_1!!(k_2+\ldots+k_N+N-2)!!}{(k_1+\ldots+k_N+N-1)!!}\left(\frac{2}{\pi}\right)^{\delta(k_1,k_2+\ldots+k_N+N-2)}\\
&&\frac{\pi}{2}\cdot\frac{k_2!!(k_3+\ldots+k_N+N-3)!!}{(k_2+\ldots+k_N+N-2)!!}\left(\frac{2}{\pi}\right)^{\delta(k_2,k_3+\ldots+k_N+N-3)}\\
&&\ldots\\
&&\frac{\pi}{2}\cdot\frac{k_{N-2}!!(k_{N-1}+k_N+1)!!}{(k_{N-2}+k_{N-1}+k_N+2)!!}\left(\frac{2}{\pi}\right)^{\delta(k_{N-2},k_{N-1}+k_N+1)}\\
&&\frac{\pi}{2}\cdot\frac{k_{N-1}!!k_N!!}{(k_{N-1}+k_N+1)!!}\left(\frac{2}{\pi}\right)^{\delta(k_{N-1},k_N)}
\end{eqnarray*}

In this expression most of the factorials cancel, and the $\delta$ exponents on the right sum up to the following number:
$$\Delta(k_1,\ldots,k_N)=\sum_{i=1}^{N-1}\delta(k_i,k_{i+1}+\ldots+k_N+N-i-1)$$

In other words, with this notation, the above formula reads:
\begin{eqnarray*}
I'
&=&\left(\frac{\pi}{2}\right)^{N-1}\frac{k_1!!k_2!!\ldots k_N!!}{(k_1+\ldots+k_N+N-1)!!}\left(\frac{2}{\pi}\right)^{\Delta(k_1,\ldots,k_N)}\\
&=&\left(\frac{2}{\pi}\right)^{\Delta(k_1,\ldots,k_N)-N+1}\frac{k_1!!k_2!!\ldots k_N!!}{(k_1+\ldots+k_N+N-1)!!}\\
&=&\left(\frac{2}{\pi}\right)^{\Sigma(k_1,\ldots,k_N)-[N/2]}\frac{k_1!!k_2!!\ldots k_N!!}{(k_1+\ldots+k_N+N-1)!!}
\end{eqnarray*}

Here the formula relating $\Delta$ to $\Sigma$ follows from a number of simple observations, the first of which is the following one: due to obvious parity reasons, the sequence of $\delta$ numbers appearing in the definition of $\Delta$ cannot contain two consecutive zeroes.

Together with $I=(2^N/V)I'$, this gives the formula in the statement.
\end{proof}

As a first observation, the exponent $\Sigma$ appearing in the statement of Lemma 5.1 can be written as well in the following compact form:
$$\Sigma(k_1,\ldots,k_p)=\left[\frac{N+odds+1}{2}\right]-\left[\frac{N+1}{2}\right]$$

However, for concrete applications, the writing in Lemma 5.1 is more convenient.

\begin{theorem}
With $N\to\infty$ we have $K_N\geq\sqrt{2/\pi}\cdot N\sqrt{N}$.
\end{theorem}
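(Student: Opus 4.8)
The plan is to compute $I_1=\int_{O(N)}||U||_1\,dU$ and use the trivial bound $K_N\geq I_1$, since $K_N$ is a supremum and $I_1$ an average. By linearity of the integral and the fact that each entry $U_{ij}$ of a Haar-random orthogonal matrix has the same distribution as a single coordinate $x_1$ of a uniform point on $S^{N-1}$, we have
$$I_1=\sum_{ij=1}^N\int_{O(N)}|U_{ij}|\,dU=N^2\int_{S^{N-1}}|x_1|\,dx.$$
So the whole problem reduces to evaluating $\int_{S^{N-1}}|x_1|\,dx$, which is the case $p=1$, $k_1=1$ of Lemma 5.1.

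Next I would apply Lemma 5.1 with a single exponent $k_1=1$. Here ``odds'' $=1$, so the exponent $\Sigma$ equals $[odds/2]=0$ if $N$ is odd and $[(odds+1)/2]=1$ if $N$ is even; in either case the relevant asymptotics will wash out the difference. The formula gives
$$\int_{S^{N-1}}|x_1|\,dx=\left(\frac{2}{\pi}\right)^{\Sigma}\frac{(N-1)!!\,1!!}{N!!},$$
with $1!!=1$. Thus $I_1=N^2(2/\pi)^\Sigma (N-1)!!/N!!$, and I need the asymptotic behaviour of the ratio $(N-1)!!/N!!$ as $N\to\infty$.

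The key asymptotic is $(N-1)!!/N!!\sim\sqrt{2/(\pi N)}$ for $N$ even and $(N-1)!!/N!!\sim\sqrt{\pi/(2N)}$ for $N$ odd, both of which follow from Stirling's formula (writing the double factorials in terms of ordinary factorials: $(2m)!!=2^m m!$ and $(2m-1)!!=(2m)!/(2^m m!)$). In the even case $\Sigma=1$, so $I_1\sim N^2\cdot(2/\pi)\cdot\sqrt{2/(\pi N)}=\sqrt{2/\pi}\cdot N\sqrt{N}\cdot(2/\pi)\cdot\sqrt{1/1}$ — I should be careful here: combining, $I_1\sim (2/\pi)\sqrt{2/(\pi N)}\,N^2$. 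In the odd case $\Sigma=0$, so $I_1\sim\sqrt{\pi/(2N)}\,N^2$. I would check that both expressions are asymptotically $\sqrt{2/\pi}\cdot N^{3/2}$ up to the same leading constant — indeed $(2/\pi)\sqrt{2/\pi}=\sqrt{2/\pi}\cdot(2/\pi)$ should match $\sqrt{\pi/2}\cdot\sqrt{1/N}$ adjustments; the cleanest route is to write both cases uniformly using $\Gamma$-functions, $\int_{S^{N-1}}|x_1|\,dx=\frac{\Gamma(N/2)}{\sqrt\pi\,\Gamma((N+1)/2)}$, and apply $\Gamma((N+1)/2)/\Gamma(N/2)\sim\sqrt{N/2}$ to get $\int_{S^{N-1}}|x_1|\,dx\sim\sqrt{2/(\pi N)}$, hence $I_1\sim\sqrt{2/\pi}\cdot N\sqrt N$. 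Therefore $K_N\geq I_1\geq(\sqrt{2/\pi}-o(1))N\sqrt N$, which is the claim (and since $\sqrt{2/\pi}=0.7978\ldots$, this also yields the $0.797$ bound quoted in the introduction for $N$ large enough).

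The main obstacle is purely bookkeeping: reconciling the even/odd split in Lemma 5.1's exponent $\Sigma$ with the even/odd split in the asymptotics of $(N-1)!!/N!!$, and confirming that the factors of $2/\pi$ land correctly so that the leading constant is exactly $\sqrt{2/\pi}$ in both parities. I expect the smoothest presentation is to bypass the double-factorial casework entirely by recognizing $N^2\int_{S^{N-1}}|x_1|\,dx$ as a Gamma-function ratio and invoking the standard estimate $\Gamma(x+1/2)/\Gamma(x)\sim\sqrt{x}$; there is genuinely no analytic difficulty beyond this, only the risk of an off-by-a-constant slip.
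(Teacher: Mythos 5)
Your proposal is correct and takes essentially the same route as the paper: bound $K_N$ below by the average of the 1-norm, reduce that average to $N^2\int_{S^{N-1}}|x_1|\,dx$ by symmetry and the row-slice identification with the sphere, and extract the asymptotic $\sqrt{2/(\pi N)}$ for the spherical integral (the paper does this last step via Lemma 5.1, central binomial coefficients and Stirling, while your Gamma-ratio shortcut is an equivalent, cleaner evaluation of the same constant). One small caution: the paper's convention is $m!!=(m-1)(m-3)\cdots$, i.e.\ the standard $(m-1)!!$, which is why your even/odd double-factorial bookkeeping briefly misplaces a factor of $2/\pi$; the Gamma-function route you fall back on settles the constant correctly in both parities.
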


\begin{proof}
We use the well-known fact that the row slices of $O(N)$ are all isomorphic to the sphere $S^{N-1}$, with the restriction of the Haar measure of $O(N)$ corresponding in this way to the uniform measure on $S^{N-1}$. Together with a standard symmetry argument, this shows that the average of the 1-norm on $O(N)$ is given by:
\begin{eqnarray*}
\int_{O(N)}||U||_1\,dU
&=&\sum_{ij}\int_{O(N)}|U_{ij}|\,dU\\
&=&N^2\int_{O(N)}|U_{11}|\,dU\\
&=&N^2\int_{S^{N-1}}|x_1|\,dx
\end{eqnarray*}

We denote by $I$ the integral on the right. According to Lemma 5.1, we have:
\begin{eqnarray*}
I
&=&\left(\frac{2}{\pi}\right)^{\Sigma(1)}\frac{(N-1)!!}{N!!}\\
&=&\begin{cases}
\displaystyle{\frac{2}{\pi}\cdot\frac{2.4.6\ldots(N-2)}{3.5.7\ldots(N-1)}}& (N\ {\rm even})\\
\displaystyle{1\cdot\frac{3.5.7\ldots(N-2)}{2.4.6\ldots(N-1)}}& (N\ {\rm odd})
\end{cases}\\
&=&
\begin{cases}
\displaystyle{\frac{4^M}{\pi M}}\begin{pmatrix}2M\\ M\end{pmatrix}^{-1}& (N=2M)\\
4^{-M}\begin{pmatrix}2M\\ M\end{pmatrix}& (N=2M+1)
\end{cases}
\end{eqnarray*}

Now by using the Stirling formula, we get:
\begin{eqnarray*}
I
&\simeq&\begin{cases}
\displaystyle{\frac{4^M}{\pi M}\cdot\frac{\sqrt{\pi M}}{4^M}}& (N=2M)\\
\displaystyle{4^{-M}\cdot\frac{4^M}{\sqrt{\pi M}}}& (N=2M+1)
\end{cases}\\
&=&\begin{cases}
\displaystyle{\frac{1}{\sqrt{\pi M}}}& (N=2M)\\
\displaystyle{\frac{1}{\sqrt{\pi M}}}& (N=2M+1)
\end{cases}\\
&\simeq&\sqrt{\frac{2}{\pi N}}
\end{eqnarray*}

Thus the average of the 1-norm is asymptotically equal to $\sqrt{2/\pi}\cdot N\sqrt{N}$. Now since the maximum of the 1-norm is greater than its average, we get the result.
\end{proof}

Observe that the constant appearing in the above statement is $\sqrt{2/\pi}=0.797..$. Thus the above result justifes the last row of the table in the introduction.

\section{Higher moments}

In order to find better estimates on $K_N$, the problem is to compute the higher moments of the 1-norm, given by:
$$I_k=\int_{O(N)}||U||_1^k\,dU$$

In general, this is a quite delicate problem. As an illustration for the potential difficulties appearing here, let us work out the case of the second moment. 

\begin{theorem}
With $N\to\infty$ we have $I_2\geq (1+4/\pi)N^2$.
\end{theorem}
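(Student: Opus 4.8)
The plan is to expand the square, integrate term by term using the spherical integrals of Lemma 5.1 for the pieces we can compute, and fall back on a crude pointwise bound for the single piece that resists exact evaluation.

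First I would write $I_2=\int_{O(N)}(\sum_{ij}|U_{ij}|)^2\,dU=\sum_{i,j,k,l}\int_{O(N)}|U_{ij}U_{kl}|\,dU$ and sort the $N^4$ quadruples $(i,j,k,l)$ according to whether $i=k$ and whether $j=l$. Since the distribution of each row of a Haar-random $U\in O(N)$ is uniform on $S^{N-1}$ (as used in the proof of Theorem 5.2), and since the Haar measure is invariant under $U\mapsto U^t$ and under permutations of rows and of columns, the integral $\int_{O(N)}|U_{ij}U_{kl}|\,dU$ depends only on this pattern. This yields three families: the $N^2$ ``diagonal'' terms with $(i,j)=(k,l)$; the $2N^2(N-1)$ ``one-coincidence'' terms with $i=k,\,j\neq l$ or with $i\neq k,\,j=l$; and the $N^2(N-1)^2$ ``generic'' terms with $i\neq k$ and $j\neq l$.

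Next I would evaluate the first two families. By Lemma 5.1, $\int_{O(N)}U_{11}^2\,dU=\int_{S^{N-1}}x_1^2\,dx=1/N$, so the diagonal family contributes $N$; and $\int_{O(N)}|U_{11}U_{12}|\,dU=\int_{S^{N-1}}|x_1x_2|\,dx=2/(\pi N)$, so the one-coincidence family contributes $2N^2(N-1)\cdot 2/(\pi N)=(4/\pi)N(N-1)$. For the generic family the common value is $B:=\int_{O(N)}|U_{11}U_{22}|\,dU$, and this is the main obstacle: it is not a polynomial integral, and I do not expect it to have a closed form — this is exactly the ``difficulty'' the theorem is meant to illustrate. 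So I would only bound $B$ from below. Each entry of an orthogonal matrix satisfies $|U_{ij}|\leq 1$, being a coordinate of a unit vector, so $|U_{11}U_{22}|\geq U_{11}^2U_{22}^2$ and hence $B\geq\int_{O(N)}U_{11}^2U_{22}^2\,dU$.

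Finally I would compute this last integral, which is polynomial and needs no Weingarten calculus: from $\sum_iU_{i1}^2=1$ we get $\sum_i\int_{O(N)}U_{i1}^2U_{22}^2\,dU=\int_{O(N)}U_{22}^2\,dU=1/N$; by invariance under row permutations the $N-1$ summands with $i\neq 2$ all equal $\int_{O(N)}U_{11}^2U_{22}^2\,dU$, while the $i=2$ summand is $\int_{S^{N-1}}x_1^2x_2^2\,dx=1/(N(N+2))$ by Lemma 5.1. Solving gives $\int_{O(N)}U_{11}^2U_{22}^2\,dU=(N+1)/(N(N-1)(N+2))$. Assembling the three families,
$$I_2\geq N+\frac{4}{\pi}N(N-1)+N^2(N-1)^2\cdot\frac{N+1}{N(N-1)(N+2)}=N+\frac{4}{\pi}N(N-1)+\frac{N(N-1)(N+1)}{N+2}.$$
As $N\to\infty$ the middle term is $\sim(4/\pi)N^2$ and the last term is $\sim N^2$, which gives $I_2\geq(1+4/\pi)N^2$. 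Beyond the obstacle already noted, the one point to double-check is that collapsing $|U_{11}U_{22}|$ to its square does not cost us the constant in front of $N^2$: it does not, because $NU_{11}^2$ and $NU_{22}^2$ asymptotically decorrelate with mean $1$, so the generic family contributes $N^2(N-1)^2\cdot(1/N^2)(1+o(1))\sim N^2$, producing precisely the summand $1$ in $1+4/\pi$.
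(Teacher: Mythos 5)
Your proof is correct and follows essentially the same route as the paper: the same decomposition of $I_2$ into the three coincidence patterns with multiplicities $N^2$, $2N^2(N-1)$ and $N^2(N-1)^2$, the same spherical values $1/N$ and $\frac{2}{\pi N}$ for the first two families, and the same pointwise bound $|U_{11}U_{22}|\geq U_{11}^2U_{22}^2$ for the generic family. The only point where you diverge is the evaluation of $\int_{O(N)}U_{11}^2U_{22}^2\,dU$: the paper quotes the Weingarten formula of \cite{csn}, whereas you derive the value $\frac{N+1}{N(N-1)(N+2)}$ elementarily from $\sum_iU_{i1}^2=1$, row-permutation invariance, and $\int_{S^{N-1}}x_1^2x_2^2\,dx=\frac{1}{N(N+2)}$; this is correct and makes the step self-contained at no cost. (Your closing heuristic about $NU_{11}^2$ and $NU_{22}^2$ decorrelating is unnecessary, since you already have the exact value, but it is harmless.)
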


\begin{proof}
We use Lemma 5.1 for the spherical part, and the inequality $|U_{11}U_{22}|\geq U_{11}^2U_{22}^2$ together with the Weingarten formula in \cite{csn} for the non-spherical part. We get:
\begin{eqnarray*}
I_2
&=&N^2\int_{O(N)}\!U_{11}^2dU+2N^2(N-1)\int_{O(N)}\!|U_{11}U_{12}|dU+N^2(N-1)^2\int_{O(N)}\!|U_{11}U_{22}|dU\\
&\geq&N^2\int_{S^{N-1}}x_1^2\,dx+2N^2(N-1)\int_{S^{N-1}}|x_1x_2|\,dx+N^2(N-1)^2\int_{O(N)}U_{11}^2U_{22}^2\,dU\\
&=&N^2\cdot\frac{1}{N}+2N^2(N-1)\cdot\frac{2}{\pi}\cdot\frac{1}{N}+N^2(N-1)^2\cdot\frac{N+1}{(N-1)N(N+2)}
\end{eqnarray*}

This gives the estimate in the statement.
\end{proof}

The above proof makes it clear that the complexity of the computation basically grows exponentially with $k$. The point, however, is that some simplifications should appear in the limit $k\to\infty$, which corresponds to the key computation.

It is beyond the purposes of this paper to further develop this point of view. Observe however that the $k\to\infty$ simplifications expected to appear would not be part of the usual Weingarten philosophy \cite{wei}, which roughly states that some remarkable simplifications appear in the limit $N\to\infty$. However, in order to approach the asymptotic Hadamard conjecture via the present analytic techniques, both $k\to\infty$ and $N\to\infty$ methods and simplifications are probably needed. There seems to be a lot of work to be done here, and we intend to clarify a bit the situation in our next paper \cite{bcs}.

\section{Generalizations}

We would like to point out here that some useful, alternative approaches to the problem might come from the use of the $p$-norm, with $p\neq 2$ arbitrary. 

The $p$-norm of a square matrix $U\in M_N(\mathbb R)$ is given by:
$$||U||_p=\left(\sum_{ij=1}^N|U_{ij}|^p\right)^{1/p}$$

We have the following result, generalizing Proposition 1.1.

\begin{proposition}
Let $U\in O(N)$, and $p\in [1,\infty]$.
\begin{enumerate}
\item If $p<2$ then $||U||_p\leq N^{2/p-1/2}$, with equality iff $H=\sqrt{N}U$ is Hadamard.

\item If $p>2$ then $||U||_p\geq N^{2/p-1/2}$, with equality iff $H=\sqrt{N}U$ is Hadamard.
\end{enumerate}
\end{proposition}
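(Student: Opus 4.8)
The plan is to mimic the Cauchy–Schwarz argument of Proposition 1.1, but now using H\"older's inequality (or equivalently the power-mean inequality) in place of Cauchy–Schwarz, exploiting throughout the single constraint $\sum_{ij}U_{ij}^2=N$ coming from $U\in O(N)$. First I would fix $U\in O(N)$ and write $a_{ij}=|U_{ij}|\geq 0$, so that $\sum_{ij}a_{ij}^2=\mathrm{Tr}(UU^t)=N$. The whole statement is then an inequality between the $\ell^p$-norm and the $\ell^2$-norm of the vector $(a_{ij})$ of length $N^2$, under the normalization that its $\ell^2$-norm is $\sqrt N$.

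For part (1), with $1\le p<2$, I would apply H\"older's inequality with exponents $2/p>1$ and its conjugate $2/(2-p)$ to the product $\sum_{ij}a_{ij}^p\cdot 1$:
\begin{eqnarray*}
\sum_{ij}a_{ij}^p
&=&\sum_{ij}a_{ij}^p\cdot 1
\ \le\ \left(\sum_{ij}(a_{ij}^p)^{2/p}\right)^{p/2}\left(\sum_{ij}1^{2/(2-p)}\right)^{(2-p)/2}\\
&=&\left(\sum_{ij}a_{ij}^2\right)^{p/2}\cdot (N^2)^{(2-p)/2}
\ =\ N^{p/2}\cdot N^{2-p}
\ =\ N^{2-p/2}.
\end{eqnarray*}
Taking $p$-th roots gives $\|U\|_p\le N^{2/p-1/2}$. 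For part (2), with $p>2$, the inequality reverses because now $p/2>1$: by the power-mean inequality (Jensen applied to the convex function $t\mapsto t^{p/2}$ on the $N^2$ values $a_{ij}^2$, with uniform weights $1/N^2$),
\[
\frac{1}{N^2}\sum_{ij}(a_{ij}^2)^{p/2}\ \ge\ \left(\frac{1}{N^2}\sum_{ij}a_{ij}^2\right)^{p/2}=\left(\frac{1}{N}\right)^{p/2},
\]
so $\sum_{ij}a_{ij}^p\ge N^2\cdot N^{-p/2}=N^{2-p/2}$, and taking $p$-th roots gives $\|U\|_p\ge N^{2/p-1/2}$.

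For the equality cases, in both parts the relevant inequality (H\"older, resp. Jensen for a strictly convex function) is an equality precisely when the numbers $a_{ij}^2$ are all equal, i.e.\ when all $|U_{ij}|$ take a common value $c$; combined with $\sum_{ij}c^2=N$ this forces $c=1/\sqrt N$, which is exactly the statement that $H=\sqrt N\,U$ has all entries $\pm1$, hence — since $H H^t = N\,UU^t = N I_N$ — is Hadamard. The endpoint cases $p=1$ and $p=\infty$ should be addressed separately but are routine: $p=1$ is Proposition 1.1, and for $p=\infty$ the claim is $\max_{ij}|U_{ij}|\ge 1/\sqrt N$ with equality iff $\sqrt N\,U$ is Hadamard, which follows from $\sum_{ij}U_{ij}^2=N$ over $N^2$ entries. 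I do not anticipate a genuine obstacle here; the only point requiring a little care is stating the equality analysis cleanly and uniformly across $p<2$ and $p>2$, and making sure the strict convexity of $t\mapsto t^{p/2}$ (for $p\neq 2$) is invoked correctly so that equality really does force all entries equal in modulus.
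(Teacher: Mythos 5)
Your proposal is correct and follows essentially the same route as the paper: H\"older's inequality applied to the $N^2$ numbers $|U_{ij}|$ under the constraint $\sum_{ij}U_{ij}^2=N$, with equality forcing $|U_{ij}|=1/\sqrt N$, i.e.\ $\sqrt N\,U$ Hadamard. Your use of Jensen/power-mean for $p>2$ is just a restatement of the H\"older step the paper uses there, and your separate remarks on the endpoints $p=1,\infty$ are a harmless extra precaution.
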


\begin{proof}
We use the H\"older inequality, and the equality $||U||_2=\sqrt{N}$.

(1) In the case $p<2$, the estimate follows from:
$$\sum_{ij=1}^N1\cdot|U_{ij}|^p\leq\left(\sum_{ij=1}^N1^{2/(2-p)}\right)^{1-p/2}\left(\sum_{ij=1}^N(|U_{ij}|^p)^{2/p}\right)^{p/2}$$

(2) In the case $p>2$, the estimate follows from:
$$\sum_{ij=1}^N1\cdot U_{ij}^2\leq\left(\sum_{ij=1}^N1^{p/(p-2)}\right)^{1-2/p}\left(\sum_{ij=1}^N(U_{ij}^2)^{p/2}\right)^{2/p}$$
 
In both cases the equality holds when all the numbers $|U_{ij}|$ are proportional, and we conclude that we have equality if and only if $|U_{ij}|=1/\sqrt{N}$, as stated. 
\end{proof}

Observe that at $p=1$, we recover indeed Proposition 1.1. Observe also that at $p=4$ we get that $||U||_4\geq 1$, with equality if and only if $\sqrt{N}U$ is Hadamard.

The various problems discussed in this paper make sense for any $p\neq 2$.

\begin{problem}
For $N>2$ not multiple of $4$, what are the matrices $U\in O(N)$ which maximize the $p$-norm $(p<2)$, or minimize the $p$-norm $(p>2)$? 
\end{problem}

In addition, it is not clear what happens with the local or global maximizers or minimizers of the $p$-norm, when $N$ is fixed and $p\neq 2$ varies.

\section{Concluding remarks}

We have seen in this paper that the Hadamard conjecture suggests the study of the $p$-norm on $O(N)$. The key quantity to be computed is the supremum of the norm:
$$K_N=\sup_{U\in O(N)}\left(\sum_{ij=1}^N|U_{ij}|^p\right)^{1/p}$$

Moreover, we have seen that there is a natural analytic approach to the approximate computation of $K_N$, based on the study of the following integral:
$$I_k=\int_{O(N)}\left(\sum_{ij=1}^N|U_{ij}|^p\right)^{k/p}\,dU$$

The exponent $p=1$, used in this paper, seems to be the most adapted for the general study of the norm, and notably for the computation of the local maximizers. For integral computations the exponent $p=4$ seems to be adapted as well. 

Some further advances should come via a combination of the various $p=1$ and $p=4$ techniques. We intend to come back to these questions in a forthcoming paper \cite{bcs}.

\end{document}